\tikzset{commutative diagrams/.cd}
\tikzstyle{every node}=[anchor=west, minimum height=3em]
\definecolor{forestgreen}{rgb}{0.00, 0.39, 0.00} 
\definecolor{blueblue}{rgb}{0.40, 0.00, 1.00} 
\definecolor{sienna}{rgb}{0.33, 0.08, 0.11}
\newcommand{\defi}{\textbf}
\theoremstyle{plain}
\newtheorem{theorem}{Theorem}[section] 
\newtheorem*{theorem*}{Theorem}
\newtheorem*{hypothesis*}{Hypothesis}
\newtheorem{lemma}[theorem]{Lemma}
\newtheorem{proposition}[theorem]{Proposition}
\newtheorem{corollary}[theorem]{Corollary}
\newtheorem{notation}[theorem]{Notation}
\theoremstyle{definition}
\newtheorem{definition}[theorem]{Definition}
\newtheorem*{definition*}{Definition}
\newtheorem{remark}[theorem]{Remark}
\newtheorem*{question*}{\sc Question}
\newtheorem*{remark*}{\sc Remark}
\newtheorem{example}[theorem]{Example}
\newtheorem*{example*}{\sc Example}
\DeclareMathOperator{\spn}{span}
\pgfplotsset{width=7cm,compat=1.8}
\renewcommand*{\numberline}[1]{\hb@xt@1em{#1\hfil}} 
\pgfplotsset{width=7cm,compat=1.8}
\newcommand\RR{\mathbb{R}}
\newcommand\by{\boldsymbol{y}}
\newcommand\byi{{\boldsymbol{y}_i}}
\newcommand\byj{{\boldsymbol{y}_j}}
\newcommand\byp{{\boldsymbol{y}^\prime}}
\newcommand\be{\boldsymbol{e}}
\newcommand\bk{\boldsymbol{k}}
\newcommand\bx{\boldsymbol{x}}
\newcommand\bv{\boldsymbol{v}}
\newcommand\bK{\boldsymbol{K}}
\newcommand\bX{\boldsymbol{X}}
\newcommand\bp{\mathbf{p}}
\newcommand\bbeta{\boldsymbol\beta}
\newcommand\bB{\mathcal{B}}
\title{The structure of the moduli space of toric dynamical systems of a reaction network}
\date{\today}
\author{Gheorghe Craciun\thanks{University of Wisconsin-Madison, USA}, Jiaxin Jin\thanks{Ohio State University, USA}, Miruna-\c Stefana Sorea\thanks{SISSA (Scuola Internazionale Superiore di Studi Avanzati), Trieste, Italy and  Lucian Blaga University of Sibiu, Romania}}
\begin{document}

\maketitle

\begin{abstract}
\noindent
    We consider \emph{toric dynamical systems}, which are also called \emph{complex-balanced mass-action systems}. These are remarkably stable polynomial dynamical systems that arise from the analysis of mathematical models of reaction networks when, under the assumption of mass-action kinetics, they can give rise to {\em complex-balanced equilibria}. Given a reaction network, we study the moduli space of toric dynamical systems generated by this network, also called  \emph{the toric locus} of the network. The toric locus is an algebraic variety, and we are especially interested in its topological properties. We show that complex-balanced equilibria \emph{depend continuously} on the parameter values in the toric locus, and, using this result, we prove that the toric locus has a remarkable  \emph{product structure}: it is homeomorphic to the product of the set of complex-balanced flux vectors and the affine invariant polyhedron of the network. In particular, it follows that the toric locus is a {\em contractible manifold}.   Finally, we show that the toric locus is {invariant} with respect to bijective affine transformations of the generating reaction network.
\end{abstract}

\renewcommand{\numberline}[1]{#1~}
\setcounter{tocdepth}{2}
\tableofcontents

\section{Introduction}
Nonlinear dynamical systems are among the most common mathematical models used in the study of  population dynamics, epidemiology, biochemistry, just to name a few \cite{cy}. However, the analysis of long-term dynamical behaviour of nonlinear dynamical systems is a very difficult problem. Finding explicit, quantitative answers related to the question of how a system evolves in continuous time is usually impossible. Inspired by the work of Poincaré \cite{Poincare}, mathematicians started tackling the qualitative aspects of these systems. However, this task is also a  difficult one. For instance, consider the second part of Hilbert's 16th problem, concerning polynomial dynamical systems in the real plane. After more than a century, the problem of finding an upper bound for the number of limit cycles remains open even in the quadratic case; for technical details and historical aspects of Hilbert's 16th problem, we refer the reader to \cite{ilyashenko}, \cite[Chapter 11]{MR3235494}. Another example meant to show that nonlinear dynamical systems are challenging is the Lorenz system: a quadratic polynomial dynamical system, in the three-dimensional Euclidean space, which exhibits chaotic dynamics \cite{MR4021434}.  

\subsection{Context}
We focus on polynomial dynamical systems generated by {\em reaction networks}, which are represented by directed
graphs in  Euclidean space. One of the goals of reaction network theory is to determine information about the qualitative long-term dynamics from the algebro-combinatorial structure of the network.  
In order to model the evolution in time of the concentrations of interacting species, we use autonomous systems of ordinary differential equations, dictated by the network structure. Under the assumption of mass-action kinetics (\cite[Section 2.1.2]{feinberg}), this leads to fruitful interactions between the study of  reaction networks and applied algebraic geometry (see \cite{MR4201909}), because these systems have polynomial right-hand side. The law of mass-action is very commonly used in mathematical modeling, for instance in population dynamics, ecology, biochemistry, and chemical engineering \cite{boros2019existence,feinberg,cy}.   

In particular, we are interested in \emph{complex-balanced} mass-action systems (see \cite[Chapter 15]{feinberg}). Introduced by Horn and Jackson in \cite{HJ}, these represent a large class of polynomial dynamical systems that are known   
to have a stable dynamical behaviour that is very desirable in applications. For instance, Horn and Jackson proved that complex-balanced dynamical systems possess exactly one positive equilibrium up to conservation laws (i.e., one within each invariant polyhedron) and that this equilibrium is locally asymptotically stable (see \cite{HJ}, \cite[Theorem 2.3]{cy}). One of the most important lines of research in the field of reaction network theory is \emph{the Global Attractor Conjecture}, which says that this equilibrium is actually \emph{globally} asymptotically stable. This has been already proven in several cases, under various additional hypotheses. For the state of the art, we refer the reader to \cite{cy}. A proof in full generality of the Global Attractor Conjecture has been proposed in~\cite{GlAttrConj}.

Besides their stable dynamical behaviour, another advantage of complex-balanced dynamical systems is the fact that tools from commutative algebra, computational, applied, real algebraic geometry turn out to be useful in deducing qualitative dynamical properties, which are often encoded or hidden in the geometric structure of the associated reaction networks. For instance, in \cite{CDSS} complex-balanced dynamical systems have also been called \emph{toric dynamical systems} by Craciun, Dickenstein, Shiu and Sturmfels, to emphasize their strong combinatorial aspects and the remarkable algebraic properties of their moduli spaces. To be more precise, consider the parameter space of a reaction network. The set of parameters that gives rise to complex-balanced dynamical systems is called \emph{the toric locus} because up to a change of coordinates, this set is a variety given by a binomial ideal, intersected with the positive orthant (see \cite{CDSS}). 
Toric varieties appear in numerous applications \cite{BM} and are very appreciated and well understood by algebraic geometers, who use them often in their quest for examples and counterexamples, due to their combinatorial representation and their computational assets. For a presentation of toric varieties from the point of view of Nonlinear Algebra, the reader may refer to \cite[Chapter 8]{BM};  according to \cite[page 126]{BM}, \enquote{the world is toric}. 
Increasing interest for the moduli spaces of toric dynamical systems has been shown recently. For instance, methods to expand the toric locus from a set of Lebesgue measure zero to a positive measure set using the  \emph{disguised toric locus} are proposed in the form of a systematic algorithm in \cite{bcs}, where the authors leverage some properties of the notion of dynamical equivalence from \cite{CJY20}. See also \cite{haque2022disguised}, where the authors show that the disguised toric locus is invariant under invertible affine transformations of the network.

\subsection{Main contributions}

The results of this paper concern the topological structure of the toric locus. One of our main contributions is to show that the complex-balanced equilibria \emph{depend continuously on the parameter values}, 
i.e., {\em reaction rate constants} (Theorem \ref{th:qCont}). 
We then use this result to prove that the moduli space of toric dynamical systems (i.e., the toric locus) of any reaction network is \emph{connected} (Theorem \ref{th:main}). Next, in Theorem \ref{th:product} we show that the moduli space is \emph{homeomorphic to the product} of the set of complex-balanced flux vectors (Definition \ref{def:fluxVectors}) and the affine invariant polyhedron (Definition \ref{def:stoichiom}). 

Being homeomorphic to the product of two path-connected spaces, it also follows that the toric locus is path-connected. Hence, given any two points in the toric locus of a toric dynamical system, there will exist a continuous path between them. This might be  advantageous in computations, for instance when using numerical methods for constructing the set of equilibria along a path in parameter space. Recall that the main strategy used by homotopy continuation methods is tracking the solutions of  systems of polynomial equations which are easier to solve than the given system, or which are already known (see for instance BERTINI \cite{bert}, Julia HomotopyContinuation \cite{numericalNonlinearAlgebra,juliaHC,MR4178964,MR3984062,MR2160078,MR3840407}). 
Such tracking can take advantage of the path connectivity of the toric locus.

Furthermore,  
we recover a result from \cite[Theorem 9]{CDSS}, which says that the codimension of the toric locus in the parameter space is equal to the deficiency of the network (see Definition \ref{def:defic}). 
We also show that the toric locus is invariant under bijective affine transformations of a network (Theorem~\ref{thm:bijectiveAffine}). This result has recently been extended in~\cite{haque2022disguised}, where the authors show that the {\em disguised} toric locus is also invariant under bijective affine transformations of a network.

\subsection{Structure of the paper}

In Section \ref{sec:preliminaryNotions}, we introduce standard terminology and notations concerning dynamical systems generated by reaction networks, mostly focusing on mass-action complex-balanced dynamical systems, also called toric.  
In Section \ref{sect:dependContin}, we prove that complex-balanced equilibria depend continuously on the parameter values in $\mathcal{V}(G)$. Leveraging this result, in Section \ref{sec:Conn} we show that the toric locus is connected. 
In Section \ref{sec:product}, we first prove that the toric locus is homeomorphic to a product space.
Using this property, in Section \ref{sect:deficiency} we show Proposition \ref{prop:dim} which gives a precise formula for the dimension of the toric locus of the network.
In Section \ref{sect:bijectiveAffineTransf}, we prove Theorem \ref{thm:bijectiveAffine}
showing that any bijective affine transformation of the network preserves the toric locus.

\section{Preliminary notions}\label{sec:preliminaryNotions}

In this section, mostly following \cite{cy}, we present standard terminology  concerning a special class of nonlinear dynamical systems that are generated by (bio-chemical) reaction networks, under the assumption of mass-action kinetics. For an introduction to the general theory of nonlinear dynamical systems, the reader could refer for instance to the textbooks \cite{kuznetsov1998elements, strogatz}.
 
First, we give some classical definitions and notations relevant to the study of mass-action dynamical systems and to (bio-chemical) reaction networks. Next, we present a special class of these systems: \emph{complex-balanced} dynamical systems, which are also called \emph{toric} dynamical systems. More details can be found in the textbooks \cite{feinberg} and \cite{MR4284895}, the latter one with a view toward Nonlinear Algebra. See also \cite{CDSS, bcs, CJY20, MR3920470}. 

\begin{notation} \rm
\begin{enumerate}
\item We let $\mathbb{R}_{\geq 0}^n$ and $\mathbb{R}_{>0}^n$ denote the sets of vectors with non-negative and positive entries respectively. Similarly, $\mathbb{Z}_{\geq 0}^n$ is the set of vectors with non-negative integer components. We denote the cardinality of a set $A$ as $|A|$, and the disjoint union of sets $A$ and $B$ is denoted by $A \sqcup B$.

\item Let us consider two vectors $\bx, \by \in \RR^n$ with $\bx = (x_1, \ldots, x_n)^{\intercal}$ and $\by = (y_1, \ldots, y_n)^{\intercal}$. 
The following are the vector operations that will be used in this paper:
\begin{equation} \notag
\begin{split}
\bx \circ \by & := (x_1 y_1, \ldots, x_n y_n)^{\intercal},
\\ \exp (\bx) & := (\exp(x_1), \ldots, \exp(x_n))^{\intercal},
\\ \ln (\bx) & := (\ln(x_1), \ldots, \ln(x_n))^{\intercal}.
\end{split}
\end{equation}
For $\bx\in \RR^n_{>0}$ we also define $\bx^{\by}  := x_1^{y_{1}} x_2^{y_{2}}  \ldots x_n^{y_{n}}.$

\item We also apply vector operations on a subset of $\RR^n$, where they are applied to all elements of the subset. For example, given a vector $\bx \in \RR^n$ and a set $A \subseteq \RR^n$, 
\begin{equation} \notag
\bx \circ A := \{ \bx \circ \by: \by \in A \}.
\end{equation}
\end{enumerate}
\end{notation}

\subsection{Dynamics of reaction networks with mass-action kinetics}

We work with deterministic, autonomous, and continuous dynamical systems, generated by reaction networks. The goal is to model the variation in time of the concentrations of the species involved, under the assumption of mass-action kinetics. Mostly following the terminology and notations from \cite{cy}, let us give precise  definitions of these classical notions. 

\bigskip

The classical definition of a reaction network involves {\em species, complexes, and reactions}, as illustrated in Figure \ref{fig:3species} (and explained in detail below). In recent work it was observed that the equivalent definition of reaction network as a {\em directed graph embedded in Euclidean space}  leads to very convenient notations, see Figure \ref{fig:3speciesEEgraph}; this is why we employ this definition here. 

\begin{definition}
\begin{enumerate}
\item We denote by  $n$  the \textbf{number of species} involved in the reaction network, and denote by $X_1, \ldots, X_n$  the \textbf{species} of the network.

\item Denote by $x_i$ the \defi{concentration} of the species $X_i$, for $i=1,\ldots, n$. We consider $x_i$ as functions of time $t$: $x_i=x_i(t)$. At any time $t \geq 0$, this gives us a vector $\bx
= (x_1, \ldots, x_n)^\intercal \in \mathbb{R}^n$, also called a \defi{state} of the system.

\item A formal linear combination of species $\{ X_i\}^n_{i=1}$, with non-negative real coefficients is called a \defi{complex}. 
A \defi{reaction}  
is a directed edge between two distinct complexes.
\end{enumerate}
\end{definition}

\begin{definition} \label{def:reactNetw_RateConstants}
A \defi{reaction network}, also called a \defi{Euclidean embedded graph} or \mbox{\defi{E-graph}} (see \cite{MR3920470}) 
is a finite directed graph $G=(V,E)$ such that the set  $V\subset \mathbb{R}^n$ is a finite set of \defi{vertices} and the set $E\subseteq V\times V$ represents the finite set of \defi{edges}. We assume that there are neither self-loops nor isolated vertices.
\begin{enumerate}
\item We denote the \textbf{number of vertices} by $m$, and let $V = \{ \boldsymbol{y}_1, \ldots, \boldsymbol{y}_m\}$, where each vertex $\byi \in V$ corresponds to a complex. The entries of the vertex are the coefficients of the species in the corresponding formal linear combination. 

\item A directed edge connecting two vertices $\byi\in V$ to $\byj\in V$ is denoted by $\byi \rightarrow \byj\in E$ and represents a reaction in the network. We call the difference vector $\byj - \byi\in\mathbb{R}^n$, the \defi{reaction vector}. Here $\byi$ and $\byj$ denote the \defi{source vertex} and \defi{target vertex} respectively. 
\end{enumerate}
\end{definition}

\medskip

As we mentioned above, reaction networks can either be represented as sets of reactions (see Figure~\ref{fig:3species}), or, equivalently, by using the  Euclidean embedded graphs, where the vertices correspond to the complexes (see Figure~\ref{fig:3speciesEEgraph}). This is illustrated in the example below.

\medskip

\begin{example} \label{ex:complexesSpecies} 
Let us consider the reaction network from Figure~\ref{fig:3species}. There are three interacting species: $X_1$, $X_2$, $X_3$, and three complexes:
\begin{equation} \notag
2X_1 + 3X_2, \ 2X_2, \ X_3,
\end{equation}
and four reactions (directed edges between complexes):
\begin{equation} \notag
2X_1+3X_2 \to 2 X_2, \ 2 X_2 \to 2X_1+3X_2, \ 2 X_2 \to X_3, \ X_3 \to 2X_1+3X_2.
\end{equation}
 
\begin{figure}[H] 
\begin{center}
\begin{tikzpicture}
		\node (1) at (-3,2) {$\bullet$};
		\node (3) at (0,0) {$\bullet$};
		\node (2) at (3,2) {$\bullet$};
		\node [left=1pt of 1] {$2X_1+3X_2$};
		\node [right=1pt of 2] {$2X_2$};
		\node (x3) at (0,-0.5) {$X_3$};
		\draw [{->}, -{stealth}, thick, transform canvas={yshift=1.5pt}] (1) -- (2) node [midway, above] {};
		\draw [{->}, -{stealth}, thick, transform canvas={yshift=-1.5pt}] (2) -- (1) node [midway, below] {};
		\draw [{->}, -{stealth}, thick, transform canvas={xshift=1.5pt}] (2) -- (3) node [midway, right=6pt] {};
		\draw [{->}, -{stealth}, thick, transform canvas={xshift=-1.5pt}] (3) -- (1) node [midway, left] {};
\end{tikzpicture}\end{center}
\caption{A reaction network with three species, three complexes, and four reactions.}
\label{fig:3species}
\end{figure}  

The real coefficients appearing in each formal linear combination of species of the complexes of the reaction network from Figure \ref{fig:3species} can be represented by vectors in the three-dimensional Euclidean space:
\begin{equation} \notag
\boldsymbol{y}_1=\begin{pmatrix}2\\ 3\\ 0\end{pmatrix}, \
\boldsymbol{y}_2=\begin{pmatrix}0\\ 2\\ 0\end{pmatrix}, \
\boldsymbol{y}_3=\begin{pmatrix}0\\ 0\\ 1\end{pmatrix}.
\end{equation} 
This gives rise to an E-graph (see Figure \ref{fig:3speciesEEgraph}), whose edges become actual vectors $\byi-\byj\in\mathbb{R}^3$.

\begin{figure}[H]
\centering
 \begin{tikzpicture}[scale=1.3]
    \begin{axis}[
      view={115}{15},
      axis lines=center,
      width=6cm,height=6cm,
      ticks = none, 
xmin=0,xmax=3.5,ymin=0,ymax=3.5,zmin=0,zmax=3,
xlabel=$X_1$,
ylabel=$X_2$,
zlabel=$X_3$,
    ]
    \addplot3 [no marks, dashed, gray!70] coordinates {(1,0,0)  (1,0,2.5)};
    \addplot3 [no marks, dashed, gray!70] coordinates {(2,0,0)  (2,0,2.5)};
    \addplot3 [no marks, dashed, gray!70] coordinates {(0,1,0)  (0,1,2.5)};
    \addplot3 [no marks, dashed, gray!70] coordinates {(0,2,0)  (0,2,2.5)};
    \addplot3 [no marks, dashed, gray!70] coordinates {(2.5,0,1)  (0,0,1) (0,2.5,1)};
    \addplot3 [no marks, dashed, gray!70] coordinates {(2.5,0,2)  (0,0,2) (0,2.5,2)};
    \addplot3 [no marks, dashed, gray!70] coordinates {(1,0,0)  (1,3.5,0) };
    \addplot3 [no marks, dashed, gray!70] coordinates {(2,0,0)  (2,3.5,0) };
    \addplot3 [no marks, dashed, gray!70] coordinates {(0,1,0)  (2.5,1,0) };
    \addplot3 [no marks, dashed, gray!70] coordinates {(0,2,0)  (2.5,2,0) };
    \addplot3 [no marks, dashed, gray!70] coordinates {(0,3,0)  (2.5,3,0) };
    \addplot3 [only marks, black] coordinates {(2,3,0) (0,2,0) (0,0,1)};
    \node [outer sep=1pt] (1) at (axis cs:2,3,0) {};
    \node [outer sep=1pt] (2) at (axis cs:0,2,0) {};
    \node [outer sep=1pt] (3) at (axis cs:0,0,1) {};
        \node at (axis cs:2.6,3.4,0) [left] {$\boldsymbol{y}_1$\,};
        \node at (axis cs:0,2,0.2) [right] {$\boldsymbol{y}_2$};
        \node at (axis cs:0.2,0,1) [left] {$\boldsymbol{y}_3$};
    \draw [-{stealth}, thick, red,transform canvas={xshift=0ex, yshift=3.5ex}] (1)--(2) ;
    \draw [-{stealth}, thick, red,transform canvas={xshift=0.2ex, yshift=-3.5ex}] (2)--(1) ; 
    \draw [-{stealth}, thick, red, transform canvas={xshift=-0.2ex, yshift=0ex}] (2)--(3) ; 
    \draw [-{stealth}, thick, red, transform canvas={xshift=-0.6ex, yshift=-0.2ex}] (3)--(1) ; 
    \end{axis}
    \end{tikzpicture}
    \caption{The Euclidean embedded graph of the network from Figure \ref{fig:3species}.}
    \label{fig:3speciesEEgraph}
\end{figure}

\end{example}

\begin{definition} \label{def:weaklyRev}
Let $G=(V, E)$ be a Euclidean embedded graph.
\begin{enumerate}
\item The set of vertices $V$ is partitioned by its connected components, also called \defi{linkage classes}, and we identify them by the subset of vertices that belong to that connected component. 
We denote the \defi{number of connected components} by $\ell$, and let $V = V_1 \sqcup V_2 \cdots \sqcup V_\ell$, where each $V_i$ represents a connected component of $G$. 

\item A connected component is called  \defi{strongly connected} if every edge is part of an oriented cycle. 
Furthermore, a strongly connected component is said to be \defi{terminal}, if no other strongly connected component is reachable from it.

\item A graph $G=(V,E)$ is \defi{weakly reversible}, if every connected component is strongly connected. 
\end{enumerate}
\end{definition}

We work under the assumption of {\em mass-action kinetics}, which says that the rate with which a reaction takes place is directly proportional to the product of the concentrations of the reactant species (see \cite{cy} and references therein). Under this assumption, the dynamics can be modeled using the ODE system  (\ref{eq:massAction}) below. Starting with the work of Gatermann (see \cite{CDSS, Ga01}), the polynomial structure of the right-hand side of (\ref{eq:massAction}) has given rise to fruitful interactions between the field of reaction networks and the methods of  computational algebra.

\begin{definition} \label{def:massAction}
Given a Euclidean embedded graph $G=(V,E)$, each edge $\byi\rightarrow \byj$ is decorated with a positive constant $k_{\byi \rightarrow \byj}$ or $k_{ij}$, called a \defi{reaction rate constant}. 
Further, we denote  by ${\bk} :=(k_{ij})\in\mathbb{R}_{>0}^{E}$ the \defi{vector of reaction rate constants}.
The \defi{associated mass-action system} generated by $(G,{\bk})$ 
on $\RR^n_{>0}$ is given by
\begin{equation} \label{eq:massAction}
\frac{\mathrm{d}\bx}{\mathrm{d} t}= \sum_{\byi \rightarrow \byj \in E}k_{\byi \rightarrow \byj} \bx^\byi(\byj -\byi).
\end{equation}
\end{definition}

\medskip

For example, consider the Euclidean embedded graph from Figure~\ref{fig:3speciesEEgraph} (see Example \ref{ex:complexesSpecies}). Under  mass-action kinetics, 
the associated dynamical system is
\begin{equation} \label{eq1}
\begin{split}
\frac{\mathrm{d}\bx}{\mathrm{d} t}&=k_{12} x_1^2 x_2^3  ({\boldsymbol{y}_2}-{\boldsymbol{y}_1})+k_{21} x_2^2 ({\boldsymbol{y}_1}-{\boldsymbol{y}_2}) + k_{23} x_2^2 ({\boldsymbol{y}_3}-{\boldsymbol{y}_2})+ k_{31} x_3 ({\boldsymbol{y}_1}-{\boldsymbol{y}_3})\\
&=k_{12} x_1^2 x_2^3  \begin{pmatrix} -2 \\ -1\\ 0 \end{pmatrix}+k_{21} x_2^2 \begin{pmatrix} 2 \\ 1\\ 0 \end{pmatrix} + k_{23} x_2^2 \begin{pmatrix} 0 \\-2\\1 \end{pmatrix}+ k_{31} x_3 \begin{pmatrix} 2 \\3\\-1 \end{pmatrix}\\
    &=\begin{pmatrix} -2 k_{12} x_1^2 x_2^3 + 2k_{21} x_2^2+2 k_{31}x_3 \\ -k_{12} x_1^2 x_2^3+(k_{21}-2k_{23})x_2^2+3k_{31} x_3\\ k_{23}x_2^2-k_{31}x_3\end{pmatrix}.
\end{split}
\end{equation}

\medskip

Before the end of this subsection, we define  {\em affine invariant polyhedrons}; they will play an important role in the proof of our main results, starting with Section \ref{sect:dependContin}.

\begin{remark}[\cite{cy}]
\label{rmk: invariant}
Note that we set the domain of \eqref{eq:massAction} to be $\mathbb{R}_{>0}^n$. 
In general, systems of ODEs do not allow
$\mathbb{R}_{>0}^n$ to be forward-invariant. But under the assumption that $V \subset \mathbb{Z}_{\geq 0}^n$, the positive orthant $\mathbb{R}_{>0}^n$ is forward-invariant under system \eqref{eq:massAction}. See also \cite[Remark 2.3, page 3]{haque2022disguised}: we could also allow $V \subset \mathbb{R}_{\geq 0}^n$ or $V \subset \mathbb{R}^n.$
\end{remark}

\begin{definition} \label{def:stoichiom}
Let $G = (V, E)$ be a Euclidean embedded graph. We denote the \defi{stoichiometric subspace} of $G$ by $\mathcal{S}$, which is
\begin{equation}
\mathcal{S} = \spn \{ \byj - \byi: \byi \rightarrow \byj \in E \}.
\end{equation}
By Remark \ref{rmk: invariant}, any solution to \eqref{eq:massAction}
with initial condition ${\boldsymbol{x}_0}\in \mathbb{R}_{>0}^n$ and $V \subset \mathbb{Z}_{\geq 0}^n$, is confined to $({\boldsymbol{x}_0} + \mathcal{S})\cap \mathbb{R}_{>0}^n$. The set  $({\boldsymbol{x}_0} + \mathcal{S})\cap \mathbb{R}_{>0}^n$ is called the \defi{affine invariant polyhedron} of ${\boldsymbol{x}_0}$.
For the sake of simplicity, we use the following notation:
\[
\mathcal{S}_{{\boldsymbol{x}_0}} := ({\boldsymbol{x}_0} + \mathcal{S})\cap \mathbb{R}_{>0}^n.
\]
\end{definition}

\subsection{Complex-balanced dynamical systems and their properties}

The importance of complex-balanced dynamical systems is mostly due to their strong stability properties. For more details, we advise the reader to consult \cite{HJ},  \cite[Theorem 2.3]{cy}. Using a strictly convex Lyapunov function, Horn and Jackson  proved in \cite{HJ} that if a mass-action system has a complex-balanced steady state, then all its positive steady states are also complex-balanced, and that  there is a unique and locally asymptotically stable steady state within each affine invariant polyhedron.

\begin{definition} \label{def:CB}
Consider the associated mass-action system generated by 
$(G, \bk)$:
\begin{equation} \notag
\frac{\mathrm{d}\bx}{\mathrm{d} t}= \sum_{\byi \rightarrow \byj \in E}k_{\byi \rightarrow \byj} \bx^\byi(\byj -\byi).
\end{equation}
A state ${\boldsymbol{x}^*} \in \mathbb{R}_{>0}^n$ is called a \defi{positive steady state} if 
\begin{equation} \label{eq:ss}
\sum_{\byi \rightarrow \byj \in E}k_{\byi \rightarrow \byj} ({\boldsymbol{x}^*})^\byi(\byj -\byi) = \mathbf{0}.
\end{equation}
A positive steady state ${\boldsymbol{x}^*} \in \mathbb{R}_{>0}^n$ is called a \defi{complex-balanced steady state} if at each vertex $\by_0 \in V$,
\begin{equation} \label{eq:cB}
\sum_{\by_0 \to \byp \in E} k_{\by_0 \to \byp} ({\boldsymbol{x}^*})^{\by_0}
= \sum_{\by \to \by_0 \in E}k_{\by \to \by_0}
({\boldsymbol{x}^*})^{\by}.
\end{equation}
We say the pair $(G,{\bk})$ \defi{satisfies the complex-balanced conditions} if it has a complex-balanced steady state; and the mass-action system generated by $(G, \bk)$ is called \defi{a complex-balanced system} or \defi{toric dynamical system}. 
\end{definition}

The following classical theorem illustrates some of the most important dynamical properties of complex-balanced systems.   

\begin{theorem}
[{\cite[Theorem 2.3]{cy}}]
\label{thm:HJ}
Consider a complex-balanced system $(G, \bk)$ with one complex-balanced steady state $\bx^* \in \RR^n_{>0}$. Denote its  associated stoichiometric subspace by $\mathcal{S}$.  Then the following hold:
\begin{enumerate}[label=(\alph*)]
\item All positive steady states are complex-balanced. There is exactly one steady state within each invariant polyhedron. 

\item Any complex-balanced steady state $\bx$ satisfies the following relation: $\ln \bx - \ln \bx^* \in \mathcal{S}^\perp$.  

\item Every complex-balanced steady state is locally asymptotically stable within its invariant polyhedron. 
\end{enumerate}
\end{theorem}

\medskip

Moreover, the mass-action system (\ref{eq:massAction}) admits a matrix decomposition, which helps us in studying complex-balanced steady states.
Recall that the number of species is denoted by $n$, and the number of vertices is denoted by $m$.
Following \cite{CDSS}, we set the $n\times m$ matrix $Y$, whose columns correspond to vertices:
\[
Y := ({\boldsymbol{y}_1}, {\boldsymbol{y}_2}, \ldots, {\boldsymbol{y}_m}) = (y_{ji}) \in \mathbb{R}^{n \times m},
\]
Next, we build the following vector of monomials: 
\begin{align} \notag
\Psi(\bx) :=
\begin{pmatrix}
           \bx^{\boldsymbol{y}_1} \\
           \vdots \\
           \bx^{\boldsymbol{y}_m}
\end{pmatrix} \in \mathbb{R}^m.
\end{align}
Since each directed edge $\byi \rightarrow \byj \in E$ has a reaction rate constant $k_{ij}\in\mathbb{R}_{>0}$, we construct the $m \times m$ \textbf{Kirchoff matrix} $A_{\bk}$, which is the transpose of the negative of the graph Laplacian of $(V, E, {\bk})$:
\begin{equation} \label{def: Ak}
[A_{\bk}]_{ji} :=
\begin{cases}
k_{\byi \rightarrow \byj}, & \ \text{if } \   i \neq j \ \text{and } \byi \rightarrow \byj \in E \\[5pt]
-\sum\limits_{\byi \rightarrow \byj \in E} k_{\byi \rightarrow \byj}, & \ \text{if } \ i =j, \\[5pt]
0, & \ \text{otherwise}.
\end{cases}
\end{equation}

Then the mass-action dynamical system (\ref{eq:massAction}) generated by $(G, \bk)$ can be written in the following vectorial representation:
\begin{align} \label{eq:psi}
\frac{\mathrm{d}\bx}{\mathrm{d} t} 
= Y\cdot A_{\bk}\cdot \Psi(\bx).
\end{align}

\begin{remark}
Note that the notation we use in (\ref{eq:psi}) is different from the one in \cite[page 2]{CDSS}, by transposing. 
\end{remark}

Under direct computation, the $i$-th component of
$A_{\bk} \cdot \Psi(\bx)$ is
\begin{equation} \notag
[A_{\bk} \cdot \Psi(\bx)]_i
= \sum_{\byj \to \byi \in E}k_{\byj \to \byi} \bx^\byj
-   \sum_{\byi \to \byj \in E} k_{\byi\to \byj} \bx^\byi.
\end{equation}
Therefore, the equality (\ref{eq:cB}) is equivalent to 
\begin{equation} \label{eq:psiEquiv}
A_{\bk}\cdot \Psi(\boldsymbol{x}^*)=\mathbf{0}
\end{equation}
where ${\boldsymbol{x}^*} \in \mathbb{R}_{>0}^n$ is a complex-balanced steady state for the mass-action system $(G, {\bk})$.

\medskip

The following Lemma \ref{thm:Ak kernel} is a key 
result that we will use in the proof of Proposition \ref{prop:iff}, where we give a characterization of the complex-balanced equilibria. 

\begin{lemma}[{\cite[page 94]{MR413930}}]
\label{thm:Ak kernel}
Consider a mass-action system $(G,\bk)$ with terminal strongly connected components $T_1, T_2, \ldots, T_t$ and vertices $\{\by_1, \by_2, \ldots, \by_m \}$. Then $\text{ker}(A_{\bk})$ (see equation (\ref{def: Ak}) for the definition of $A_{\bk}$) has a basis $\{\be_1, \ldots, \be_t\}$, such that
\begin{equation} \notag
\be_p = 
\begin{cases}
[\be_p]_i > 0, & \ \text{if } \byi \in T_p, \\[5pt] 
[\be_p]_i = 0, & \ \text{otherwise},
\end{cases}
\end{equation}
where $1 \leq i \leq m$ and $1 \leq p \leq t$.
\end{lemma}

For the proof of Lemma \ref{thm:Ak kernel}, we refer the reader to \cite[page 94]{MR413930} or  \cite[Theorem 4.2]{Gu03}.

\begin{example}
Revisiting Example \ref{ex:complexesSpecies}, we have 
\begin{align*}
Y = ({\boldsymbol{y}_1}, {\boldsymbol{y}_2}, {\boldsymbol{y}_3}) =
\begin{pmatrix}
2  & 0 & 0\\
3 & 2 & 0\\
0 & 0 & 1
\end{pmatrix},
\end{align*}
and
\begin{align*}
A_{\bk}=\begin{pmatrix}
-k_{12} & k_{21} & k_{31}\\
k_{12} & -k_{21}-k_{23} & 0\\
0 & k_{23} & -k_{31}
\end{pmatrix}, \ \
\Psi(\bx) =
\begin{pmatrix}
\bx^{\boldsymbol{y}_1} \\
\bx^{\boldsymbol{y}_2} \\
\bx^{\boldsymbol{y}_3}
\end{pmatrix} = 
\begin{pmatrix}x_1^2 x_2^3
\\ x_2^2
\\ x_3
\end{pmatrix}.
\end{align*} 
Following Equations (\ref{eq:psi}), we derive 
\begin{equation} \notag
\begin{split}
\frac{\mathrm{d}\bx}{\mathrm{d} t} =Y\cdot A_{\bk}\cdot \Psi(\bx)
=\begin{pmatrix} -2 k_{12} x_1^2 x_2^3 + 2k_{21} x_2^2+2 k_{31}x_3 \\ -k_{12} x_1^2 x_2^3+(k_{21}-2k_{23})x_2^2+3k_{31} x_3\\ k_{23}x_2^2-k_{31}x_3\end{pmatrix},
\end{split}
\end{equation}
which gives the same ODE system as (\ref{eq1}).
\end{example}

\subsection{The toric locus \texorpdfstring{$\mathcal{V}(G)$}{Vg}}

Here we introduce the notion of \emph{toric locus}, which is a key concept in this paper. 
See also \cite[Definition 2.2]{bcs}.

\begin{definition} \label{def:VkG}
Consider a Euclidean embedded graph $G=(V,E)$, we let $\mathcal{V}(G) \subseteq \mathbb{R}_{>0}^{E}$ denote the set of parameters ${\bk}\in\mathbb{R}_{>0}^{E}$, for which the dynamical system generated by $(G,{\bk})$ is toric (i.e., complex-balanced). 
We refer to $\mathcal{V}(G)$ as the \defi{moduli space} or the  \defi{toric locus} of toric dynamical systems given by the Euclidean embedded graph $G$.
\end{definition}

The following theorem 
shows us that only weakly reversible E-graphs can give rise to complex-balanced mass action systems.

\begin{theorem}[\cite{horn1972necessary}]
\label{thm:cb connection with graph}

Every Euclidean embedded graph which generates a complex-balanced mass action system is weakly reversible.
Moreover, every Euclidean embedded graph which is weakly reversible permits complex-balanced mass action systems.
\end{theorem}

As a consequence, given an E-graph $G=(V,E)$, we conclude that
\begin{itemize}
\item If $G=(V,E)$ is weakly reversible, then $\mathcal{V}(G) \neq \emptyset$.

\item If $G=(V,E)$ is not weakly reversible, then $\mathcal{V}(G) = \emptyset$.
\end{itemize}
Since we are not interested in the case when $\mathcal{V}(G)$ is empty, we always assume the Euclidean embedded graph $G=(V, E)$ is weakly reversible when working with $\mathcal{V}(G)$ in this paper.

\medskip

In practice, it is difficult to compute precise values for the parameters $k_{ij}\in\mathbb{R}_{>0}$, so we usually  choose a symbolic approach and consider them as unspecified parameters, as in \cite{CDSS}. 
For instance, in Example \ref{ex:complexesSpecies}, suppose $\bx = (x_1, x_2, x_3)$ is a complex-balanced steady state, then the complex-balanced conditions are as follows:
\begin{align*}
k_{21}x_2^2+k_{31}x_3&=k_{12}x_1^2 x_2^3 , \\
k_{12}x_1^2 x_2^3&=k_{21}x_2^2 + k_{23} x_2^2, \\
k_{23}x_2^2&=k_{31} x_3.
\end{align*}
Surprisingly, the toric locus $\mathcal{V}(G)$ in Example \ref{ex:complexesSpecies} is the whole positive orthant $\mathbb{R}_{>0}^4$. This follows from a classical result, known as the \emph{Deficiency Zero Theorem}. We will revisit this example and show the details in Section \ref{sect:deficiency}.

For small enough Euclidean embedded graphs, one can successfully use Computer Algebra software such as Macaulay2 \cite{M2}, in order to apply Elimination theory \cite[Chapter 4]{BM} or Real quantifier elimination \cite[Chapter 12.3]{BPR} for computing the toric locus $\mathcal{V}(G)$.

In general, the toric locus can have quite a complicated algebraic description and it is   not easy to study.
This is reflected by Example \ref{ex:codim2} below, which shows that even for simple Euclidean embedded graphs, the topological structure of the moduli spaces of toric dynamical systems can be interesting. 

\begin{example} \label{ex:codim2}
Consider the mass-action system $(G,{\bk})$ in Figure \ref{fig:4nodes}, with four vertices:
\begin{equation} \notag
{\boldsymbol{y}_1}=\begin{pmatrix} 3 \\ 0 \end{pmatrix}, \ \
{\boldsymbol{y}_2}=\begin{pmatrix} 2 \\ 1 \end{pmatrix}, \ \
{\boldsymbol{y}_3}=\begin{pmatrix} 1 \\ 2 \end{pmatrix}, \ \
\boldsymbol{y}_4=\begin{pmatrix} 0 \\ 3 \end{pmatrix}.
\end{equation} 

Suppose $\bx = (x_1, x_2)$ is a complex-balanced steady state, then the complex-balanced conditions follow:
\begin{align*}
	k_{14} x_1^3 = k_{21} x_1^2x_2 = k_{32} x_1x_2^2 = k_{43} x_2^3.
\end{align*}
By eliminating $x_1, x_2$ above, the moduli space $\mathcal{V}(G)\subset \mathbb{R}_{>0}^4$ is the following algebraic variety given by equations (\ref{eq1_semialg}) and (\ref{eq2_semialg}), intersected with the positive orthant.
\begin{equation} \label{eq1_semialg}
(k_{43}k_{32}k_{21})(k_{21}k_{14}k_{43})=(k_{14}k_{43}k_{32})^2,
\end{equation}
and 
\begin{equation} \label{eq2_semialg}
(k_{14}k_{43}k_{32})(k_{32}k_{21}k_{14})=(k_{21}k_{14}k_{43})^2.
\end{equation}

\begin{figure}[!ht]
\centering
\begin{tikzpicture}[scale=0.9]
      \draw[ultra thin,lightgray] (-0.1,-0.1) grid (3.6,3.6);
      \draw[->,semithick] (-0.2,0) -- (4,0) node[right] {$X_1$};
      \draw[->,semithick] (0,-0.2) -- (0,4) node[above] {$X_2$};
      \node[label=right:{$\boldsymbol{y}_1$}] at (2.55,-.35) {};
      \node[label=right:{}](A) at (2.85,0) {};
      \node[label=right:{$\boldsymbol{y}_2$}](B) at (1.85,1) {};
      \node[label=right:{$\boldsymbol{y}_3$}](C) at (0.85,2) {};
      \node[label=right:{$\boldsymbol{y}_4$}](D) at (-0.15,3) {};
      \draw[fill] (A) circle (2pt);
      \draw[fill] (B) circle (2pt);
      \draw[fill] (C) circle (2pt);
      \draw[fill] (D) circle (2pt);
      \def\shf{0.22ex};
      \draw[->,transform canvas={xshift=\shf,yshift=\shf},thick, red] (B) -- (A);
      \draw[->,transform canvas={xshift=-\shf,yshift=-\shf},thick, red] (A) -- (D);
      \draw[->,transform canvas={xshift=\shf,yshift=\shf},thick,red] (C) -- (B);
      \draw[->,transform canvas={xshift=\shf,yshift=\shf},thick,red] (D) -- (C);
      \end{tikzpicture}
\caption{Cycle on four vertices.}
\label{fig:4nodes}
\end{figure}

After a change of variables,  
the moduli space becomes the intersection of a toric variety with the positive orthant. More precisely, equations (\ref{eq1_semialg}) and (\ref{eq2_semialg}) become 
\begin{equation} 
K_1K_3-K_2^2=0,
\end{equation}
and
\begin{equation} 
K_2K_4-K_3^2 = 0,
\end{equation}
where we set $K_1:=k_{43}k_{32}k_{21}, \ K_2:=k_{14}k_{43}k_{32}, \ K_3:=k_{21}k_{14}k_{43}, \ K_4:=k_{32}k_{21}k_{14}$.

From the algebraic point of view, binomial equations are desirable in computations and toric varieties are a cornerstone of algebraic geometry, since they provide many tractable examples due to their combinatorial structure, which is well understood \cite[Chapter 8]{BM}.
\end{example}

\section{Complex-balanced equilibria depend continuously on the parameter values in the toric locus~\texorpdfstring{$\mathcal{V}(G)$}{VG}}
\label{sect:dependContin}

In this section, we show the first main result of this paper: complex-balanced equilibria depend continuously on the parameters ${\bk}$ in the toric locus $\mathcal{V}(G)$ (see Definition \ref{def:VkG}). 

Now we introduce a map from $\mathcal{V}(G)$ to 
$\mathcal{S}_{{\boldsymbol{x}_0}}$, which is crucial in the later proofs.

\begin{definition} \label{def:q}
Let $G = (V, E)$ be a weakly reversible E-graph with the stoichiometric subspace $\mathcal{S}$.
Given a state ${\boldsymbol{x}_0} \in \mathbb{R}_{>0}^n$, we define the following map:
\begin{equation} \label{equ:q}
Q_{{\boldsymbol{x}_0}}: \mathcal{V}(G) \rightarrow ({\boldsymbol{x}_0}+\mathcal{S}) \cap \mathbb{R}_{>0}^n,
\end{equation}
such that for any $\bk \in \mathcal{V}(G)$, $Q_{{\boldsymbol{x}_0}}({\bk})$ 
is the complex-balanced equilibrium in the invariant polyhedron $\mathcal{S}_{{\boldsymbol{x}_0}}$, under the mass-action system $(G, \bk)$.
\end{definition}

The map $Q_{{\boldsymbol{x}_0}}$ is well-defined for any state ${\boldsymbol{x}_0} \in \mathbb{R}_{>0}^n$ and $\bk \in \mathcal{V}(G)$. This follows  from Theorem \ref{thm:HJ}, where every complex-balanced system admits a unique equilibrium within each invariant polyhedron.  
Now we show some basic properties of the map $Q_{{\boldsymbol{x}_0}}$.

\begin{lemma} \label{lemma:Qsurj}
For any state ${\boldsymbol{x}_0} \in \mathbb{R}_{>0}^n$,
the map $Q_{{\boldsymbol{x}_0}}$ from Definition \ref{def:q}
is surjective.
\end{lemma}

\begin{proof}
To prove the surjectivity of $Q_{{\boldsymbol{x}_0}}$, we show that for any point ${\hat{\boldsymbol{x}}}\in ({\boldsymbol{x}_0}+\mathcal{S})\cap \mathbb{R}_{>0}^n$, there  exists $\hat{\bk} \in \mathcal{V}(G)$ such that $Q_{{\boldsymbol{x}_0}}(\hat{\bk})={\hat{\boldsymbol{x}}}$.

From Definition \ref{def:VkG}, given some parameters ${\bk} \in \mathcal{V}(G)$, there exists $\bx \in \mathbb{R}_{>0}^n$ such that $Q_{{\boldsymbol{x}_0}} ({\bk})={\bx}$ and the pair $(\bk, \bx)$ satisfies the complex-balanced conditions (\ref{eq:cB}), namely: for each vertex $\byi \in V$,
\begin{equation} \label{eq: cb in surjective 1}
\sum_{\byi\to \byj \in E}k_{\byi\to \byj} {\bx}^\byi
= \sum_{\byj\to \byi\in E}k_{\byj\to \byi}
{\bx}^{\byj}.
\end{equation}

Now we define the set of parameters 
$\hat{\bk} = (\hat{k}_{\byi \rightarrow \byj})$ as
\begin{equation} \label{eq:fluxNew}
\hat{k}_{\byi \rightarrow \byj}:=\frac{k_{\byi \rightarrow \byj} {\bx}^\byi} 
{{\hat{\boldsymbol{x}}}^\byi}.
\end{equation}
From \eqref{eq: cb in surjective 1} and \eqref{eq:fluxNew}, we derive that for each vertex $\byi \in V$,
\begin{equation}\label{eq:cbForNewk}
\sum_{\byi\to \byj\in E} \hat{k}_{\byi\to \byj} {\hat{\boldsymbol{x}}}^\byi
= \sum_{\byi\to \byj\in E}k_{\byi\to \byj} {\bx}^\byi
= \sum_{\byj\to \byi\in E}k_{\byj\to \byi}
{\bx}^{\byj}
= \sum_{\byj\to \byi\in E}\hat{k}_{\byj\to \byi}
{\hat{\boldsymbol{x}}}^{\byj}.
\end{equation}
It is clear that $\hat{\bk}\in\mathbb{R}_{>0}^{E}$. Thus, from \eqref{eq:cbForNewk} we get $\hat{\bk} \in \mathcal{V}(G)$ and the pair $(\hat{\bk}, {\hat{\boldsymbol{x}}})$ satisfies the complex-balanced conditions (\ref{eq:cB}).
Hence, we conclude $Q_{{\boldsymbol{x}_0}}(\hat{\bk})={\hat{\boldsymbol{x}}}$.
\end{proof}

\begin{lemma} \label{lemma:ConnFibre}
For any state ${\boldsymbol{x}_0} \in \mathbb{R}_{>0}^n$, consider the map $Q_{{\boldsymbol{x}_0}}$ from Definition \ref{def:q}.
Given any state $\bx \in ({\boldsymbol{x}_0}+\mathcal{S})\cap \mathbb{R}_{>0}^n$, the preimage $Q_{{\bx_0}}^{-1}(\bx)$ is connected.
\end{lemma}

\begin{proof}
Suppose any $\bx \in ({\boldsymbol{x}_0}+\mathcal{S})\cap \mathbb{R}_{>0}^n$. From Lemma \ref{lemma:Qsurj}, we have $Q_{{\bx_0}}^{-1}(\bx) \neq \emptyset$.
Follow Definition \ref{def:q},  for any $\bk \in Q_{{\bx_0}}^{-1}(\bx) \subset \mathcal{V}(G)$, the pair $(\bk, \bx)$ satisfies the complex-balanced conditions, such that for each vertex $\byi \in V$,
\begin{equation} \label{eq:massAct}
\sum_{\byi\rightarrow \byj \in E} k_{\byi \rightarrow \byj}{\boldsymbol{x}}^\byi
= \sum_{\byj \rightarrow \byi \in E} k_{\byj \rightarrow \byi}{\bx}^\byj.
\end{equation} 

Now we claim that the fiber $Q_{{\bx_0}}^{-1}(\bx)$ is a convex set.
Suppose both $\bk^{*}, \bk^{**} \in \mathcal{V}(G)$ satisfy  \eqref{eq:massAct}. 
We will show that any convex combination of $\bk^{*}$ and $ \bk^{**}$ also satisfies \eqref{eq:massAct}.
Let us  consider the following set:
\begin{equation}
L (\bk^{*}, \bk^{**})
:= \{ a \bk^{*} + (1-a) \bk^{**}: 0 \leq a \leq 1\}.
\end{equation}
Under direct computation, we obtain for each vertex $\byi \in V$ and any $0 \leq a \leq 1$,
\begin{equation}
\sum_{\byi\rightarrow \byj \in E}
(a k^{*}_{\byi \rightarrow \byj} + (1-a) k^{**}_{\byi \rightarrow \byj}) {\bx}^\byi
= \sum_{\byj\rightarrow \byi \in E}(a k^{*}_{\byj \rightarrow \byi} + (1-a) k^{**}_{\byj \rightarrow \byi}) {\bx}^\byj.
\end{equation}
Hence, we prove that $L (\bk^{*}, \bk^{**}) \subseteq Q_{{\bx_0}}^{-1}(\bx)$.  
This shows the preimage $Q_{{\bx_0}}^{-1}(\bx)$ is a convex set, and we conclude $Q_{{\bx_0}}^{-1}(\bx)$ is connected.
\end{proof}

\begin{lemma}\label{lemma:Qopen}
For any state ${\boldsymbol{x}_0} \in \mathbb{R}_{>0}^n$,
the map $Q_{{\boldsymbol{x}_0}}$ from Definition \ref{def:q}
is open.
\end{lemma}

\begin{proof}
Pick a point ${\bk} \in \mathcal{V}(G)$, we consider an open neighborhood $U$ of ${\bk}$, such that
\begin{equation} \notag
{\bk}\in U \subseteq \mathcal{V}(G).
\end{equation}
Assume $Q_{{\boldsymbol{x}_0}}({\bk}) = {\bx}$, it suffices for us to prove that ${\bx}$ is in the interior of $Q_{{\boldsymbol{x}_0}}(U)$.
Hence, it is equivalent to show that for any $0 < \epsilon \ll 1$, there exists $\delta > 0$ such that for all ${\hat{\boldsymbol{x}}}$ satisfying
$\| {\hat{\boldsymbol{x}}} - {\bx} \| \leq \delta$, there is a point $\hat{\bk} \in \mathcal{V}(G)$, such that ${\hat{\boldsymbol{x}}}=Q_{{\boldsymbol{x}_0}}(\hat{\bk})$ and $\| \hat{\bk} - \bk \| \leq \epsilon$.

Here we define the set of parameters 
$\hat{\bk} = (\hat{k}_{\byi \rightarrow \byj})$ as
\begin{equation} \label{eq:contin}
\hat{k}_{\byi \rightarrow \byj}:=\frac{k_{\byi \rightarrow \byj} {\bx}^\byi} 
{{\hat{\boldsymbol{x}}}^\byi}.
\end{equation}
Using Lemma \ref{lemma:Qsurj}, we get
$\hat{\bk} \in \mathcal{V}(G)$ and $Q_{{\boldsymbol{x}_0}} (\hat{\bk})={\hat{\boldsymbol{x}}}$.
Moreover, we rewrite \eqref{eq:contin} as 
\begin{equation} \notag
\frac{\hat{k}_{\byi \rightarrow \byj}}{{k}_{\byi \rightarrow \byj}}
= \frac{{\bx}^\byi }{{\hat{\boldsymbol{x}}}^\byi}.
\end{equation}
For each reaction $\byi \rightarrow \byj \in E$ and $0 < \epsilon \ll 1$, 
the continuity of the function $\boldsymbol{x}^\byi$ guarantees the existence of 
$\delta_{\byi \rightarrow \byj}$, such that for any $\| {\hat{\boldsymbol{x}}} - {\bx} \| \leq \delta_{\byi \rightarrow \byj}$,
\[
| \hat{k}_{\byi \rightarrow \byj} - k_{\byi \rightarrow \byj} | \leq \epsilon / |E|.
\]
Then we work on all reactions in $E$ and set $\delta = \min\limits_{\byi \rightarrow \byj \in E} \{ \delta_{\byi \rightarrow \byj} \}$.
Now suppose any state ${\hat{\boldsymbol{x}}}$ satisfying $\| {\hat{\boldsymbol{x}}} - {\bx} \| \leq \delta$, we derive that
\begin{equation} \notag
\| \hat{\bk} - \bk \|
\leq \sum\limits_{\byi \rightarrow \byj \in E} | \hat{k}_{\byi \rightarrow \byj} - k_{\byi \rightarrow \byj} |
\leq \epsilon.
\end{equation}
Therefore, ${\bx}$ is in the interior of $Q_{{\boldsymbol{x}_0}}(U)$ and the proof is concluded. 
\end{proof}

Now we state the main result of this section, Theorem \ref{th:qCont}. We will use this result in the following sections, for proof of the connectedness of the toric locus.

\begin{theorem} \label{th:qCont}
For any state ${\boldsymbol{x}_0} \in \mathbb{R}_{>0}^n$,
the map $Q_{{\boldsymbol{x}_0}}$ from Definition \ref{def:q} is continuous.
In other words, the complex-balanced equilibrium within the invariant polyhedron $S_{\boldsymbol{x}_0}$ depends continuously on the parameter values in $\mathcal{V}(G)$.
\end{theorem}

Theorem \ref{th:qCont} represents a crucial step in the proof of Theorem  \ref{th:product} (more precisely, in Lemma \ref{prop:contInv}), where we will show the product structure of the toric locus. Before proving Theorem \ref{th:qCont}, we need to address some necessary notations and lemmas.

\begin{definition}
Let $G = (V, E)$ be a strongly connected E-graph.
\begin{enumerate}
\item We call $\mathcal{T}$ a \defi{spanning tree} of $G$, if it is a connected, acyclic subgraph of $G$ that contains all vertices in $V$.

\item For a spanning tree $\mathcal{T}$ of $G$, the vertex $\boldsymbol{y} \in V$ is called a \defi{sink} of $\mathcal{T}$, if $\boldsymbol{y}$ is the target vertex for all reactions in $\mathcal{T}$ involving $\boldsymbol{y}$.

\item For a spanning tree $\mathcal{T}$ of $G$ and a vertex $\byi \in V$, 
then we call $\mathcal{T}$ a \defi{spanning $\by_i$-tree} (or \defi{$i$-tree}) if $\byi$
is the only sink of $\mathcal{T}$.
\end{enumerate}
\end{definition}

\begin{notation} \label{not:K} \rm
Let $G = (V, E)$ be a strongly connected E-graph.
\begin{enumerate}
\item Consider a spanning tree $\mathcal{T}$ of $G$, we denote by ${\bk}^\mathcal{T}$  the product of all the reaction rate constants associated with reactions in the spanning tree $\mathcal{T}$.

\item Consider every spanning $\by_i$-tree of $G$, let $K_i$ denote the sum of all products associated with spanning $\by_i$-trees, such that
\begin{equation} \notag
K_i := \sum_{\mathcal{T} \text{an } i \text{-tree}}{\bk}^\mathcal{T}.
\end{equation}
\end{enumerate}
\end{notation}

\begin{proposition}[{\cite[Proposition 3]{CDSS}}]\label{prop:minors}
Consider a mass-action system $(G,{\bk})$ with the strongly connected E-graph $G = (V, E)$. 
Let $A_{{\bk}}$ be its corresponding Kirchoff matrix $A_{{\bk}}$ (see equation (\ref{def: Ak}) for the definition of $A_{\bk}$), and $\mathcal{M}_{i}$ be the matrix obtained by removing the $i$-th row and the $i$-th column of $A_{{\bk}}$, then
\begin{equation}
\mathrm{det}(\mathcal{M}_{i})=(-1)^{m-1} K_i,
\end{equation}
where 
$K_i = \sum\limits_{\mathcal{T} \text{an } i \text{-tree}}{\bk}^\mathcal{T}$ defined in Notation \ref{not:K}.
\end{proposition}

The following Proposition \ref{prop:iff} gives a characterization of the complex-balanced equilibria. The similar conclusion can be obtained from \cite{CDSS}.
For the completeness of the paper, we sketch
the proof here.

\begin{proposition}
\label{prop:iff}
Consider a weakly reversible mass-action system $(G,{\bk})$ with $\ell$ connected components. 
For any two vertices $\byi$ and $\byj$, we construct the following equation:
\begin{equation}\label{eq:binom}
    K_i \bx^{\byj}-K_j \bx^{\byi}=0,
\end{equation}
where 
$K_i = \sum_{\mathcal{T} \text{an } i \text{-tree}}{\bk}^\mathcal{T}$ is defined in Notation \ref{not:K}. Then $\bx$ is a complex-balanced equilibrium for the reaction rate vector ${\bk}$ if and only if 
Equations (\ref{eq:binom}) are satisfied for every pair of vertices in the same connected component in $G$.
\end{proposition}

\begin{proof}
From \eqref{def: Ak}, we get $[A_{\bk}]_{ji} \neq 0$, if $\byi \rightarrow \byj \in E$ or $i = j$. After we relabel the vertices according to the connected components of $G$, the Kirchoff matrix $A_{{\bk}}$ will be a block diagonal matrix, where each diagonal block corresponds to a connected component of $G$. 

Following Equations (\ref{eq:psiEquiv}), $\bx$ is a complex-balanced equilibrium if and only if 
$A_{\bk}\cdot \Psi(\bx) = \mathbf{0}$ under the reaction rate vector ${\bk}$.
Since we consider $A_{{\bk}}$ as a block diagonal matrix, it suffices to prove the proposition when the system has a single connected component (i.e. $\ell = 1$).

Now suppose $G = (V, E)$ has one connected component, thus it is strongly connected. Applying Lemma \ref{thm:Ak kernel} on the system $(G,\bk)$, we deduce that
\begin{equation} \label{eq: kerAk}
\dim (\text{ker}(A_{\bk})) = 1, \ \text{and } \det (A_{\bk}) = 0.
\end{equation}
Note that the minor of matrix $A_{\bk}$ is independent of the choice of rows because the column sums of $A_{\bk}$ are zero.
Using Proposition \ref{prop:minors} and expanding 
the determinant of $A_{\bk}$ in terms of its minors, we derive that 
\begin{equation}
A_{\bk}\cdot {\bK} = \mathbf{0},
\end{equation}
where ${\bK} = (K_1, K_2, \ldots, K_m)^\intercal$.

Now we obtain both ${\bK}$ and $\Psi(\bx)$ belongs to the null-space of $A_{\bk}$. One can check that they are both positive vectors.
From $\dim (\text{ker}(A_{\bk})) = 1$ in Equation \eqref{eq: kerAk}, we deduce that
the two vectors ${\bK}$ and $\Psi(\bx)$ are proportional. Hence, it is clear that $A_{\bk}\cdot \Psi(\bx) = \mathbf{0}$ if and only if Equations (\ref{eq:binom}) are satisfied for every pair of vertices of $G$.
Again using Equations (\ref{eq:psiEquiv}), we conclude this proposition.
\end{proof}

\begin{example}[See also {\cite[Equation 3.12]{MR339688}}] \label{ex:3vertices}
Consider a strongly connected mass-action system $(G, \bk)$ in Figure \ref{fig:3vertices}, with three vertices:
\begin{equation} \notag
{\boldsymbol{y}_1}=\begin{pmatrix} 2 \\ 0 \end{pmatrix}, \ \
{\boldsymbol{y}_2}=\begin{pmatrix} 1 \\ 1 \end{pmatrix}, \ \
{\boldsymbol{y}_3}=\begin{pmatrix} 0 \\ 2 \end{pmatrix}.
\end{equation} 

\begin{figure}[H]
    \centering
    \begin{tikzpicture}[scale=1.2]
    \draw[ultra thin,lightgray](-0.1,-0.1) grid (2.6,2.4);
    \draw[->,semithick] (-0.2,0) -- (2.8,0) node[right] {$X_1$};
    \draw[->,semithick] (0,-0.2) -- (0,2.6) node[above] {$X_2$};
    \node[label=right:{$\boldsymbol{y}_1$}](A) at (1.9,-.25) {};
    \node[label=right:{}](A) at (1.9,0) {};
    \node[label=right:{$\boldsymbol{y}_2$}] at (0.9,1.2) {};
    \node[label=right:{}](B) at (0.9,1) {};
    \node[label=right:{$\boldsymbol{y}_3$}] at (-0.1,2.2) {};
    \node[label=right:{}](C) at (-0.1,2) {};
    \draw[fill] (A) circle (1.5pt);
    \draw[fill] (B) circle (1.5pt);
    \draw[fill] (C) circle (1.5pt);

    \def\shf{0.2ex};
    \draw[->,transform canvas={xshift=\shf,yshift=\shf}, thick, red] (A) -- (B);
    \draw[->,transform canvas={xshift=-\shf,yshift=-\shf}, thick, red] (B) -- (A);
    \draw[->,transform canvas={xshift=\shf,yshift=\shf}, thick, red] (B) -- (C);
    \draw[->,transform canvas={xshift=-\shf,yshift=-\shf}, thick, red] (C) -- (B);

    \def\shf{0.6ex};
    \draw[->,transform canvas={xshift=\shf,yshift=\shf},thick, red] (C) -- (A);
    \draw[->,transform canvas={xshift=-\shf,yshift=-\shf},thick, red] (A) -- (C);
  \end{tikzpicture}
    \caption{Complete bidirected graph with three vertices, considered in Example \ref{ex:3vertices}.}
    \label{fig:3vertices}
\end{figure}

For the vertex $\by_1$, we list all spanning $\by_1$-trees of $G$ as follows:
\begin{figure}[H]
	\centering
     \begin{subfigure}[b]{0.3\textwidth}
         \centering
         \begin{tikzpicture}[scale=1.2]
    \draw[ultra thin,lightgray](-0.1,-0.1) grid (2.6,2.4);
    \draw[->,semithick] (-0.2,0) -- (2.8,0) node[right] {$X_1$};
    \draw[->,semithick] (0,-0.2) -- (0,2.6) node[above] {$X_2$};
    \node[label=right:{$\boldsymbol{y}_1$}](A) at (1.9,-.25) {};
    \node[label=right:{}](A) at (1.9,0) {};
    \node[label=right:{$\boldsymbol{y}_2$}] at (0.9,1.2) {};
    \node[label=right:{}](B) at (0.9,1) {};
    \node[label=right:{$\boldsymbol{y}_3$}] at (-0.1,2.2) {};
    \node[label=right:{}](C) at (-0.1,2) {};
    \draw[fill] (A) circle (1.5pt);
    \draw[fill] (B) circle (1.5pt);
    \draw[fill] (C) circle (1.5pt);

    \def\shf{0.2ex};
    \draw[->,transform canvas={xshift=-\shf,yshift=-\shf}, thick, red] (B) -- (A);
    \draw[->,transform canvas={xshift=-\shf,yshift=-\shf}, thick, red] (C) -- (B);
  \end{tikzpicture}
         \caption{}
         \label{a}
     \end{subfigure}
     \hfill
     \begin{subfigure}[b]{0.3\textwidth}
         \centering
         \begin{tikzpicture}[scale=1.2]
    \draw[ultra thin,lightgray](-0.1,-0.1) grid (2.6,2.4);
    \draw[->,semithick] (-0.2,0) -- (2.8,0) node[right] {$X_1$};
    \draw[->,semithick] (0,-0.2) -- (0,2.6) node[above] {$X_2$};
    \node[label=right:{$\boldsymbol{y}_1$}](A) at (1.9,-.25) {};
    \node[label=right:{}](A) at (1.9,0) {};
    \node[label=right:{$\boldsymbol{y}_2$}] at (0.9,1.2) {};
    \node[label=right:{}](B) at (0.9,1) {};
    \node[label=right:{$\boldsymbol{y}_3$}] at (-0.1,2.2) {};
    \node[label=right:{}](C) at (-0.1,2) {};
    \draw[fill] (A) circle (1.5pt);
    \draw[fill] (B) circle (1.5pt);
    \draw[fill] (C) circle (1.5pt);

    \def\shf{0.2ex};
    
    \draw[->,transform canvas={xshift=-\shf,yshift=-\shf}, thick, red] (B) -- (A);

    \def\shf{0.6ex};
    \draw[->,transform canvas={xshift=\shf,yshift=\shf},thick, red] (C) -- (A);
    
  \end{tikzpicture}
         \caption{}
         \label{b}
     \end{subfigure}
     \hfill
     \begin{subfigure}[b]{0.3\textwidth}
         \centering
         \begin{tikzpicture}[scale=1.2]
    \draw[ultra thin,lightgray](-0.1,-0.1) grid (2.6,2.4);
    \draw[->,semithick] (-0.2,0) -- (2.8,0) node[right] {$X_1$};
    \draw[->,semithick] (0,-0.2) -- (0,2.6) node[above] {$X_2$};
    \node[label=right:{$\boldsymbol{y}_1$}](A) at (1.9,-.25) {};
    \node[label=right:{}](A) at (1.9,0) {};
    \node[label=right:{$\boldsymbol{y}_2$}] at (0.9,1.2) {};
    \node[label=right:{}](B) at (0.9,1) {};
    \node[label=right:{$\boldsymbol{y}_3$}] at (-0.1,2.2) {};
    \node[label=right:{}](C) at (-0.1,2) {};
    \draw[fill] (A) circle (1.5pt);
    \draw[fill] (B) circle (1.5pt);
    \draw[fill] (C) circle (1.5pt);

    \def\shf{0.2ex};
    
    \draw[->,transform canvas={xshift=-\shf,yshift=-\shf}, thick, red] (B) -- (C);

    \def\shf{0.6ex};
    \draw[->,transform canvas={xshift=\shf,yshift=\shf},thick, red] (C) -- (A);
    
  \end{tikzpicture}
         \caption{}
         \label{c}
     \end{subfigure}
    \caption{Spanning $\by_1$-trees of $G$.}
    \label{fig:spanning tree in 3vertices}
\end{figure}

From Notation \ref{not:K}, we obtain that
\begin{equation} \notag
K_1 = k_{21}k_{31}+k_{32}k_{21}+k_{23}k_{31}. 
\end{equation}
Analogously, we can derive $K_2, K_3$ corresponding to the vertices $\by_2, \by_3$ in $G$,
\begin{equation} \notag
\begin{split}
& K_2 = k_{12}k_{32}+k_{13}k_{32}+k_{31}k_{12}
\\& K_3 = k_{13}k_{23}+k_{21}k_{13}+k_{12}k_{23}.
\end{split}
\end{equation}

Suppose $\bx = (x_1, x_2)$ is a complex-balanced steady state. Using Proposition \ref{prop:iff}, we get that $\bk \in \mathcal{V}(G)$, if and only if
\begin{equation} \label{eq: cb in 3.10 1}
\frac{K_1}{\bx^{{\boldsymbol{y}_1}}}
= \frac{K_2}{\bx^{{\boldsymbol{y}_2}}}
= \frac{K_3}{\bx^{{\boldsymbol{y}_3}}}.
\end{equation}
By eliminating $x_1, x_2$ in equation \eqref{eq: cb in 3.10 1}, the moduli space $\mathcal{V}(G)\subset \mathbb{R}^6_{>0}$ must satisfy the following binomial:
\begin{equation} \label{eq: cb in 3.10 2}
K_1K_3 - K_2^2 = 0,
\end{equation}
Therefore, we recover the result from \cite[Equation 3.12]{MR339688} (see also \cite[Example 1]{CDSS}, \cite[page 195]{MR4284895}): the toric locus (moduli space) can be written as 
\begin{equation}
\begin{split} \notag
\mathcal{V}(G) = \big\{ {\bk}\in\mathbb{R}_{>0}^6: \
(k_{21}k_{31}+k_{32}k_{21}+k_{23}k_{31}) 
 (k_{13}k_{23}+k_{21}k_{13}+ k_{12}k_{23}) &
\\ -(k_{12}k_{32}+k_{13}k_{32}+k_{31}k_{12})^2 & = 0 \big\}.
\end{split}
\end{equation}

\end{example}

\begin{definition}[\cite{shastri}]
Consider two manifolds $A$ and $B$ in the  Euclidean space $\RR^n$. We say that $A$ and $B$   \defi{intersect transversally}, if at any intersection point $x \in A \cap B$, $T_x (A) + T_x (B) = \RR^n$, that is, their tangent spaces span $\RR^n$. 
\end{definition}

Before we proceed to the proof of Theorem \ref{th:qCont} we also need the following lemma:

\begin{lemma}[{\cite[Lemma 5.4]{MR4034297}}]
\label{thm:transversal} 
 Let $\bx_1, \bx_2 \in \RR^n_{>0}$ be positive vectors.  Consider  a vector subspace $S$ in $\RR^n$. Let $\bx_1 + S$ and $\bx_2 \circ \exp (S^\perp)$ be two manifolds of $\RR^n$. Then the two manifolds intersect transversally, i.e., 
\begin{equation} \notag
T_{\bp} (\bx_1 + S) + T_{\bp} (\bx_2 \circ \exp (S^\perp)) = \RR^n,
\end{equation}
for any point $\bp \in (\bx_1 + S) \cap (\bx_2 \circ \exp (S^\perp))$.
\end{lemma}

Finally, we are prepared to prove Theorem \ref{th:qCont}. 
Let us roughly explain the main ideas of the proof. By Notation \ref{not:K}, we have \begin{equation} \notag
K_i := \sum_{\mathcal{T} \text{an } i \text{-tree}}{\bk}^\mathcal{T}.
\end{equation} We will show that the set of complex-balanced equilibria depends continuously on ${\bK}$. There are two main steps. First, we prove the theorem in the case where the graph $G$ has only one connected component; second, we generalize the result for any number of connected components. 
In the case of one connected component, we proceed as follows. For any state ${\boldsymbol{x}_0} \in \mathbb{R}_{>0}^n$, the corresponding complex-balanced equilibrium is the unique intersection between the set of complex-balanced equilibria and the affine invariant polyhedron $({\boldsymbol{x}_0} + \mathcal{S}) \cap \mathbb{R}_{>0}^{n}$. We find a vector $\bX^* \in \mathcal{S}$, such that $\exp  (\bX^*)$ is a complex-balanced equilibrium of the system $(G, \bk)$.  
By Lemma \ref{thm:transversal} we have that  
$({\boldsymbol{x}_0} + \mathcal{S})$ and $\exp (\bX^* + \mathcal{S}^{\perp})$ intersect transversally, thus the unique intersection point varies continuously as a function of $\bX^*$. 
Since $\bX^*$ depends continuously on ${\bK}$, we conclude the proof for the case of one connected component, and then extend to the general case.

\begin{proof}[\textbf{Proof of Theorem \ref{th:qCont}}]

Here, for the sake of simplicity, we temporarily make the following abuse of notation:
\begin{equation} \label{not:X}
\bX = (X_1, \cdots, X_n)^\intercal := \ln \bx=(\ln x_1,\ldots, \ln x_n)^\intercal.
\end{equation}

Recall Notation \ref{not:K}, for each vertex $\byi \in V$, we have
\begin{equation} \notag
K_i = \sum_{\mathcal{T} \text{an } i \text{-tree}}{\bk}^\mathcal{T},
\end{equation}
where ${\bk}^\mathcal{T}$ is the product of reaction rates $k_{ij}$ associated with reactions in the spanning $\by_i$-tree $\mathcal{T}$ of $G$.
It is standard to derive that the vector ${\bK} = (K_i)  \in \mathbb{R}_{>0}^m$ depends smoothly 
on the reaction rate vector ${\bk} = (k_{ij})  \in \mathbb{R}_{>0}^{E}$. Hence, it suffices for us to show that the set of complex-balanced equilibria depends continuously on ${\bK}$.

By Proposition \ref{prop:iff}, a state $\bx$ is a complex-balanced equilibrium if and only if for any two vertices $\byi, \byj$ in the same connected component of $G$,
\begin{equation} \label{eq:binom2}
K_i \bx^{\byj}
= K_j \bx^{\byi}.
\end{equation}
Taking the log of both sides in Equation \eqref{eq:binom2}, we derive
\begin{equation} \label{eq:logMultivar1}
\ln (K_i) + \byj^\intercal \cdot \ln (\bx)
=\ln (K_j) + \byi^\intercal \cdot \ln (\bx).
\end{equation} 
Thus, we can rewrite (\ref{eq:logMultivar1}) as
\begin{equation}\label{eq:logMultivar2}
\ln (K_i / K_j)
= (\byi^\intercal - \byj^\intercal) \cdot \bX,
\end{equation}
where $\byi$ and $\byj$ are two vertices belonging to the same connected component of $G$.

We show the rest of the proof in two steps. 
First, we prove the theorem under the assumption that the graph $G$ has only one connected component.
Next, we explain how to generalize the result into an arbitrary number of connected components. 

Now suppose the graph $G$ has a single connected component (i.e. $\ell = 1$), then all vertices $\{{\boldsymbol{y}_1}, \ldots, {\boldsymbol{y}_m}\}$ are in the same connected component. It is clear that Equations (\ref{eq:logMultivar2}) are equivalent to the following system of linear equations in $\bX$:
\begin{equation} \label{eq:logMultivar4}
\begin{bmatrix}
\ln ( K_1 / K_2) \\
\ln ( K_2 / K_3) \\
\vdots \\
\ln ( K_{m-1} / K_m)
\end{bmatrix}
=\begin{bmatrix}
 \by_1^\intercal - \by_2^\intercal \\
\by_2^\intercal - \by_3^\intercal  \\
\vdots \\
\by_{m-1}^\intercal - \by_{m}^\intercal
\end{bmatrix}
\begin{bmatrix}
 X_1\\
 X_2\\
\vdots \\
X_n
\end{bmatrix}.
\end{equation}
After we set
\begin{equation} \notag
\Delta \boldsymbol{y} := 
\begin{bmatrix}
 \by_1^\intercal - \by_2^\intercal \\
\by_2^\intercal - \by_3^\intercal  \\
\vdots \\
\by_{m-1}^\intercal - \by_{m}^\intercal
\end{bmatrix}, \ \text{and }
\Delta {\bK} :=
\begin{bmatrix}
K_1 / K_2 \\
K_2 / K_3 \\
\vdots \\
K_{m-1}/ K_m
\end{bmatrix},
\end{equation} 
the system (\ref{eq:logMultivar4}) can be expressed as 
\begin{equation} \label{system 1}
\ln (\Delta {\bK})=(\Delta \boldsymbol{y}) \bX.
\end{equation}
Since $G$ is strongly connected, then its stoichiometric subspace is 
\begin{equation} \notag
\mathcal{S} = \spn \{ \by_1^\intercal -\by_2^\intercal, \by_2^\intercal -\by_3^\intercal, \ldots, \by_{m-1}^\intercal -\by_{m}^\intercal \}.
\end{equation}
Let $s$ be the dimension of  $\mathcal{S}$, then we deduce that $s \leq \min \{ m-1, n\}$, and the matrix $\Delta \boldsymbol{y}$ has exactly $s$ linearly independent rows.
W.l.o.g. we assume the first $s$ rows in $\Delta \boldsymbol{y}$ are linearly independent. Thus, we obtain
\begin{equation} \label{S span}
\mathcal{S} = \spn \{ \by_1^\intercal -\by_2^\intercal, \by_2^\intercal -\by_3^\intercal, \ldots, \by_{s}^\intercal -\by_{s+1}^\intercal \}.
\end{equation}
Furthermore, we consider the system of equations as follows:
\begin{equation} \label{system 2}
\ln (\Delta_s {\bK})=(\Delta_s \boldsymbol{y}) \bX,
\end{equation}
where
\begin{equation} \notag
\Delta_s \boldsymbol{y} := 
\begin{bmatrix}
 \by_1^\intercal - \by_2^\intercal \\
\by_2^\intercal - \by_3^\intercal  \\
\vdots \\
\by_s^\intercal - \by_{s+1}^\intercal
\end{bmatrix}, \ \text{and }
\Delta_s {\bK}:=\begin{bmatrix}
K_1 / K_2 \\
K_2 / K_3 \\
\vdots \\
K_{s} / K_{s+1}
\end{bmatrix}.
\end{equation}

Since $\bk \in \mathcal{V}(G)$, by Theorem \ref{thm:HJ}, the complex-balanced system $(G, \bk)$ must admit one complex-balanced steady state $\bx^* \in \RR^n_{>0}$, i.e. $\ln \bx^*$ is a solution to \eqref{system 1}.
From Theorem \ref{thm:HJ}, any complex-balanced steady state $\bx$ satisfies $\ln \bx - \ln \bx^* \in \mathcal{S}^\perp$, where $\mathcal{S}^{\perp}$ denotes the orthogonal complement of $\mathcal{S}$. Thus the solutions to \eqref{system 1} can be written as 
$\bX = \ln \bx^* + \mathcal{S}^\perp$, and this shows the dimension of the set of solutions to (\ref{system 1}) is $n-s$. 

Moreover, it is straight to check that the solutions of (\ref{system 1}) must solve (\ref{system 2}).
Since the rows in the matrix $\Delta_s \boldsymbol{y}$ are linearly independent, the set of solutions to (\ref{system 2}) is also of dimension $n-s$. Therefore, we conclude that system (\ref{system 1})  is equivalent to system (\ref{system 2}) in solving $\bX$.

Next, we construct a special solution $\bX^*$ to system \eqref{system 2} with $\bX^* \in \mathcal{S}$.
Recall that $s \leq \min \{ m-1, n\}$. In function of $s$ the dimension  of the stoichiometric subspace, we consider two cases:

\medskip

\textbf{Case 1: } $s = n$. Then the stoichiometric subspace $\mathcal{S} = \mathbb{R}^n$, and $\Delta_s \boldsymbol{y} \in \mathbb{R}_{n \times n}$ is a square full rank matrix, i.e. $\Delta_s \boldsymbol{y}$ is invertible.
Thus, we derive a solution of (\ref{system 2}) as
\begin{equation} \label{X* 1}
\bX^* = (\Delta_s \boldsymbol{y})^{-1} \ln (\Delta_s {\bK}). 
\end{equation}
It is clear that $\bX^* \in \mathcal{S} = \mathbb{R}^n$, and $\exp  (\bX^*)$ satisfies equations  \eqref{eq:binom2} by construction. This ensures that $\exp  (\bX^*)$ is a complex-balanced equilibrium.

\medskip

\textbf{Case 2: } $s < n$.
Recall that $\mathcal{S}^{\perp}$ denotes the orthogonal complement of $\mathcal{S}$. 
Since the stoichiometric subspace $\mathcal{S} \subset \mathbb{R}^n$, we obtain that $\mathcal{S}^{\perp} \neq \emptyset$ and
\begin{equation} \label{S perp}
0 < \dim (\mathcal{S}^{\perp}) = n - \dim (\mathcal{S}) = n - s.
\end{equation}
Then we consider a basis of $\mathcal{S}^{\perp}$, denoted by $B$, such that  
\begin{equation} \notag
B = \{ \bv_1, \bv_2, \ldots, \bv_{n-s} \} \subset \mathbb{R}^n.
\end{equation}
Furthermore, we build another matrix and vector below
\begin{equation} \label{tilde matirx}
\tilde{\Delta} \boldsymbol{y} := 
\begin{bmatrix}
\Delta_s \boldsymbol{y} \\
\bv_1^\intercal \\
\vdots \\
\bv_{n-s}^\intercal
\end{bmatrix}, \ \text{and }
\tilde{\Delta} {\bK} := \begin{bmatrix}
\Delta_s {\bK} \\
0 \\
\vdots \\
0
\end{bmatrix},
\end{equation}
and consider the following system:
\begin{equation} \label{system 3}
\ln (\tilde{\Delta} {\bK}) = (\tilde{\Delta} \boldsymbol{y}) \bX.
\end{equation}
It is clear that the solutions of (\ref{system 3}) must solve (\ref{system 2}). 
From \eqref{S span} and $\{ \bv_1, \ldots, \bv_{n-s} \}$ forming a basis of $\mathcal{S}^{\perp}$, we deduce that $\tilde{\Delta} \boldsymbol{y} \in \mathbb{R}_{n \times n}$ is an invertible matrix.
Hence, we obtain a solution of (\ref{system 3}) as
\begin{equation} \label{X* 2}
	\bX^* = (\tilde{\Delta} \boldsymbol{y})^{-1} \ln (\tilde{\Delta} {\bK}).
\end{equation}
Moreover, for $i = 1, \cdots, n-s$, we have
\begin{equation} \notag
\bv_i^\intercal \cdot \bX^*  = 0,
\end{equation}
and this shows that $\bX^* \in \mathcal{S}$. 
By construction, $\exp  (\bX^*)$ must solve equations  \eqref{eq:binom2}, thus it is a complex-balanced equilibrium. 

In conclusion, we have found a vector $\bX^* \in \mathcal{S}$, such that $\exp  (\bX^*)$ is a complex-balanced equilibrium of the system $(G, \bk)$ in both cases.
Further, using the fact that both $(\Delta_s \boldsymbol{y})^{-1}$ and $(\tilde{\Delta} \boldsymbol{y})^{-1}$ are fixed real matrices, we deduce $\bX^*$ depends smoothly  
on the vector ${\bK}$.
From Theorem \ref{thm:HJ}(b), given a complex-balanced system $(G, \bk)$ and one complex-balanced steady state  $\exp  (\bX^*)$ constructed above, the set of all complex-balanced equilibria of the system can be written as $\exp (\bX^* + \mathcal{S}^{\perp})$.

More specifically, for any state ${\boldsymbol{x}_0} \in \mathbb{R}_{>0}^n$, the corresponding complex-balanced equilibrium is the unique intersection between the set of complex-balanced equilibria 
$\exp (\bX^* + \mathcal{S}^{\perp})$ and the affine invariant polyhedron $({\boldsymbol{x}_0} + \mathcal{S}) \cap \mathbb{R}_{>0}^{n}$.
Using Lemma \ref{thm:transversal}, we get that the two manifolds $({\boldsymbol{x}_0} + \mathcal{S})$ and $\exp (\bX^* + \mathcal{S}^{\perp})$ intersect transversally. 
Hence, given a state ${\boldsymbol{x}_0}$, the (unique) intersection point varies continuously as a function of $\bX^*$. 
Together with the fact that $\bX^*$ depends continuously on ${\bK}$, which additionally varies continuously on ${\bk}$, we conclude that the map $Q_{{\boldsymbol{x}_0}}$ is continuous on $\bk \in \mathcal{V}(G)$ when the graph $G$ has only one connected component. 

Finally, we consider the case when the graph $G$ has multiple connected components, $V_1, \ldots, V_\ell$ with $\ell > 1$.
Following the proof in Proposition \ref{prop:iff}, we can relabel the vertices according to the connected components of $G$, i.e., for $1 \leq p \leq \ell$,
\[
V_p = \{ \by_{m_{p-1}+1}, \ldots, \by_{m_p}\},
\]
such that the Kirchoff matrix $A_{{\bk}}$ will be a block diagonal matrix, where each diagonal block corresponds to a connected component of $G$. 

Recall from Equations \eqref{eq:logMultivar2}, a state $\bx$ is a complex-balanced equilibrium, if and only if for any two vertices $\byi, \byj$ in the same connected component of $G$,
\begin{equation} \notag
\ln (K_i / K_j)
= (\byi^\intercal - \byj^\intercal) \cdot \bX,
\end{equation}
which is equivalent to the following system of linear equations in $\bX$:
\begin{equation} \label{eq:logMultivar4 l>1}
\underbrace{
\begin{bmatrix}
\ln ( K_1 / K_2) \\
\vdots \\
\ln ( K_{m_1 - 1} / K_{m_1}) 
\\
\hdashline[1.5pt/1.5pt]
\ln ( K_{m_1 + 1} / K_{m_1 + 2}) \\
\vdots \\
\ln ( K_{m_2 - 1} / K_{m_2}) 
\\
\hdashline[1.5pt/1.5pt]
\vdots \\
\ln ( K_{m_{\ell}-1} / K_{m_{\ell}})
\end{bmatrix} 
}_{\ln (\Delta {\bK})}
= 
\underbrace{
\begin{bmatrix}
 \by_1^\intercal - \by_2^\intercal \\
\vdots \\
\by_{m_1 - 1}^\intercal - \by_{m_1}^\intercal 
\\
\hdashline[1.5pt/1.5pt]
\by_{m_1 + 1}^\intercal - \by_{m_1 + 2}^\intercal 
\\
\vdots \\
\by_{m_2 - 1}^\intercal - \by_{m_2}^\intercal  
\\
\hdashline[1.5pt/1.5pt]
\vdots \\
\by_{m_{\ell}-1}^\intercal - \by_{m_{\ell}}^\intercal
\end{bmatrix}
}_{\Delta \boldsymbol{y}}
\begin{bmatrix}
X_1 \\
X_2 \\
\vdots \\
X_n
\end{bmatrix},
\end{equation}
and we can express it as
\begin{equation} \label{system 1 l>1}
\ln (\Delta {\bK}) = (\Delta \boldsymbol{y}) \bX.
\end{equation}
Since $G$ has $\ell$ connected components, its stoichiometric subspace is 
\begin{equation} \notag
\mathcal{S} = \spn \{ \by_1^\intercal -\by_2^\intercal, \ldots, \by_{m_1 - 1}^\intercal - \by_{m_1}^\intercal, 
\by_{m_1 + 1}^\intercal - \by_{m_1 + 2}^\intercal, \ldots, \by_{m_{\ell}-1}^\intercal -\by_{m_{\ell}}^\intercal \}.
\end{equation}
Let $s$ be the dimension of  $\mathcal{S}$. Then we deduce that $s \leq \min \{ m-\ell, n\}$, and the matrix $\Delta \boldsymbol{y}$ has exactly $s$ linearly independent rows.

Analogously, we pick $s$ linear independent rows in $\Delta \boldsymbol{y}$, and they also span stoichiometric subspace $\mathcal{S}$.
Moreover, these rows in 
$\Delta \boldsymbol{y}$ formulate a full row rank matrix $\Delta_s \boldsymbol{y}$, while the corresponding rows in $\ln (\Delta {\bK})$ gives us the vector $\ln (\Delta_s {\bK})$.
And it is easy to check that system (\ref{system 1 l>1}) is equivalent to the following system in $\bX$:
\begin{equation} \label{system 2 l>1}
\ln (\Delta_s {\bK})=(\Delta_s \boldsymbol{y}) \bX.
\end{equation}

Next, we construct a special solution $\bX^*$ to system \eqref{system 2 l>1} with $\bX^* \in \mathcal{S}$.
Similarly, we consider $s$ the dimension of the stoichiometric subspace in two cases: $s = n$ and $s < n$.

If $s = n$, then $\Delta_s \boldsymbol{y}$ is an invertible matrix.
Thus, we derive a solution of (\ref{system 2 l>1}) as
\begin{equation} \notag
\bX^* = (\Delta_s \boldsymbol{y})^{-1} \ln (\Delta_s {\bK}). 
\end{equation}
It is easy to see that $\bX^* \in \mathcal{S} = \mathbb{R}^n$, and $\exp  (\bX^*)$ is a complex-balanced equilibrium.

If $s < n$, we obtain
$\dim (\mathcal{S}^{\perp}) = n - s > 0$, and consider a basis $B = \{ \bv_1, \ldots, \bv_{n-s} \}$ of $\mathcal{S}^{\perp}$.
Similar as in Equations \eqref{tilde matirx}-\eqref{X* 2}, we first add $\bv_1^\intercal, \ldots, \bv_{n-s}^\intercal$ on the bottom of the matrix $\Delta_s \boldsymbol{y}$, and adapt $n-s$ zeros to the vector $\ln (\Delta_s {\bK})$.
Then, we obtain the desired solution $\bX^* \in \mathcal{S}$ of \eqref{system 2 l>1}, with $\exp  (\bX^*)$ is a complex-balanced equilibrium.

Together with both cases, we deduce $\bX^*$ depends smoothly on the vector ${\bK}$. We omit the rest of the proof since it straightly follows from the single connected component case.
\end{proof}

The following is a direct consequence of results within the proof of the Theorem \ref{th:qCont}.
\begin{corollary}
Let $G = (V, E)$ be a weakly reversible E-graph with the stoichiometric subspace $\mathcal{S}$. For any $\bk \in \mathcal{V}(G)$, there exists a unique complex-balanced equilibrium $\bx^*$, such that $\ln (\bx^*) \in \mathcal{S}$ and $\bx^*$ depends smoothly on the parameter values  $\bk \in \mathcal{V}(G)$.
\end{corollary}

\begin{definition}[\cite{lee}]
A surjective, continuous, and open map is called a \defi{quotient map}.
\end{definition}

\begin{corollary} \label{cor:quot}
For any state ${\boldsymbol{x}_0} \in \mathbb{R}_{>0}^n$,
the map $Q_{{\boldsymbol{x}_0}}$ from Definition \ref{def:q} is a quotient map.
\end{corollary}

\begin{proof}
From Lemma \ref{lemma:Qsurj} and Lemma \ref{lemma:Qopen}, we proved the map $Q_{{\boldsymbol{x}_0}}$ is surjective and open. 
Together with Theorem \ref{th:qCont}, we conclude the map $Q_{{\boldsymbol{x}_0}}$ is a quotient map. 
\end{proof}

\subsection{The toric locus \texorpdfstring{$\mathcal{V}(G)$}{VG} is connected}
\label{sec:Conn}

The main result of this section is Theorem \ref{th:main}, where we show the connectedness of the toric locus $\mathcal{V}(G)$. 
We first recall a fundamental result in general topology as follows:

\begin{lemma}[{\cite[Theorem 9.4]{MR0264581}}]\label{lemma:threeTopSp}
Consider three topological spaces $A, B, C$ and a surjective map $f: A \to B$. Let B be endowed with the quotient topology induced by $f$. 
Given an arbitrary map $g: B \to C$, then $g$ is continuous if and only if the map $g \circ f: A \to C$ is continuous.
\end{lemma}

\begin{theorem} \label{th:main}
Let $G = (V, E)$ be a weakly reversible E-graph. Then the toric locus $\mathcal{V}(G)$ is connected.
\end{theorem}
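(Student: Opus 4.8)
The plan is to package the structural properties of the map $Q_{\mathbf{x_0}}$ from the previous section into a single topological statement. We may fix any $\mathbf{x_0}$ in the state space, since the domain of $Q_{\mathbf{x_0}}$ is always $\mathcal{V}(G)$. First I would observe that the target space $(\mathbf{x_0}+\mathcal{S})\cap\mathbb{R}_{>0}^n$ is the intersection of the affine subspace $\mathbf{x_0}+\mathcal{S}$ with the open positive orthant, hence it is convex and in particular connected; it is nonempty because $\mathbf{x_0}$ itself belongs to it. Weak reversibility of $G$ is exactly the hypothesis guaranteeing that $\mathcal{V}(G)$ is nonempty and that Proposition \ref{prop:iff}, hence Theorem \ref{th:qCont}, applies, so that all the good properties of $Q_{\mathbf{x_0}}$ are available.

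The central tool I would use is the elementary fact that if $f\colon X\to Y$ is a quotient map, $Y$ is connected, and every fibre $f^{-1}(y)$ is connected, then $X$ is connected. To prove it, suppose for contradiction that $X=A\sqcup B$ with $A,B$ open, disjoint and nonempty. Connectedness of each fibre forces every fibre to lie entirely in $A$ or entirely in $B$, so both $A$ and $B$ are saturated, i.e. $A=f^{-1}(f(A))$ and $B=f^{-1}(f(B))$. Since $f$ is a quotient map and the preimages $f^{-1}(f(A)), f^{-1}(f(B))$ are open, the images $f(A)$ and $f(B)$ are open in $Y$; they are disjoint, nonempty and cover $Y$, contradicting the connectedness of $Y$.

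With this in hand the conclusion is immediate. By Corollary \ref{cor:quot} the map $Q_{\mathbf{x_0}}$ is a quotient map onto the connected base $(\mathbf{x_0}+\mathcal{S})\cap\mathbb{R}_{>0}^n$, and by Lemma \ref{lemma:ConnFibre} every fibre $Q_{\mathbf{x_0}}^{-1}(\mathbf{\hat{x}})$ is connected (indeed convex). Applying the topological fact above with $X=\mathcal{V}(G)$, $Y=(\mathbf{x_0}+\mathcal{S})\cap\mathbb{R}_{>0}^n$ and $f=Q_{\mathbf{x_0}}$ then yields that $\mathcal{V}(G)$ is connected.

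I expect essentially all of the difficulty to have been front-loaded into the preceding section: the genuinely hard input is Theorem \ref{th:qCont} (continuity), whose proof passes through the matrix-tree description of $\mathbf{K}$ and the transversality of the intersection with the affine invariant polyhedron. Once continuity, openness and surjectivity are secured, the only remaining subtlety is the saturated-set argument itself — in particular making sure it is connectedness of the fibres (rather than mere path-connectedness) that is invoked, and that it is the quotient-map property, not openness alone, that converts openness of the saturated preimages into openness of their images in the base.
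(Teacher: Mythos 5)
Your proof is correct and follows essentially the same route as the paper: both arguments combine Corollary \ref{cor:quot} (quotient map), Lemma \ref{lemma:ConnFibre} (connected fibres), and the connectedness of the affine invariant polyhedron, and derive a contradiction from a hypothetical disconnection of $\mathcal{V}(G)$. The only cosmetic difference is that the paper phrases the separation as a continuous surjection $\mu:\mathcal{V}(G)\to\{0,1\}$ descending to the base, while you phrase it via saturated open sets; these are interchangeable.
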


\begin{proof}
We will argue by contradiction. 
Given a state ${\boldsymbol{x}_0} \in \mathbb{R}_{>0}^n$, suppose the set $\mathcal{V}(G)$ is not connected. Then there exists a \emph{surjective continuous} map $\mu$, such that
\begin{equation*} 
    \mu :\mathcal{V}(G)\rightarrow \{0,1\}.
\end{equation*}
Next we consider the following commutative diagram:
\begin{figure}[H]
\centering
\begin{tikzcd}
\mathcal{V}(G) \arrow{d}[left = 5pt]{Q_{{\boldsymbol{x}_0}}} \arrow{r}[above = -5pt]{\mu} & \{0,1\} \\[10pt]
({\boldsymbol{x}_0}+\mathcal{S})\cap \mathbb{R}_{>0}^n \arrow{ur}[right = 10pt]{\nu} & 
\end{tikzcd}
\end{figure}

\noindent The map $\nu:({\boldsymbol{x}_0}+\mathcal{S})\cap \mathbb{R}_{>0}^n\rightarrow \{0,1\}$ in the diagram satisfies 
\begin{equation*}
    \mu=\nu\circ Q_{{\boldsymbol{x}_0}}.
\end{equation*}
It is well-defined since $Q_{{\boldsymbol{x}_0}}$ is surjective by Lemma \ref{lemma:Qsurj}. 

By Corollary \ref{cor:quot}, the map $Q_{{\boldsymbol{x}_0}}$ is a quotient map. Hence, by Lemma \ref{lemma:threeTopSp} we derive that $\nu$ is continuous if and only if $\mu$ is continuous. 
Since $\mu$ is continuous, we conclude that $\nu$ is a continuous map.
We also derive that $\nu$ is surjective from $\mu$ being a subjective map.

Note that the invariant polyhedron $({\boldsymbol{x}_0}+\mathcal{S})\cap \mathbb{R}_{>0}^n$ is connected, while the set $\{0,1\}$ is clearly disconnected.
This leads to a contradiction since every continuous function maps a connected set to a connected set.
Thus the initial supposition is false, and we conclude that $\mathcal{V}(G)$ is connected.
\end{proof}

\section{The toric locus \texorpdfstring{$\mathcal{V}(G)$}{Vg} is a product space}
\label{sec:product}

In this section, we consider a weakly reversible E-graph $G=(V, E)$ and show the toric locus $\mathcal{V}(G)$ is a product space in Theorem \ref{th:product}.

\subsection{The set of complex-balanced flux vectors \texorpdfstring{$\mathcal{B}(G)$}{Bg}}

\begin{definition}
\label{def:flux system}
Given an E-graph $G=(V, E)$, we let $\bbeta = (\beta_{\byi \to \byj})_{\byi \to \byj \in E} \in \RR_{>0}^E$ denote a  \defi{flux vector}, where the component $\beta_{\byi \to \byj} > 0$ is called the \defi{flux} of the reaction $\byi \to \byj$. 
Moreover, the pair $(G, \bbeta)$ is called a \defi{flux system}.
\end{definition}

\begin{definition}
\label{def:fluxVectors}

Consider an E-graph $G=(V,E)$, a flux vector $\bbeta \in \RR_{>0}^E$ is called a \defi{steady flux vector} on $G$ if
\begin{equation}
\label{def: SSF}
\sum_{\byi \to \byj \in E} \beta_{\byi \to \byj} 
(\byj - \byi) = \mathbf{0}.
\end{equation}
A steady flux vector $\bbeta$ is called a \defi{complex-balanced flux vector} if at each vertex $\by_0 \in V$, 
\begin{equation}
\label{eq:beta}
\sum_{\by \to \by_0 \in E} \beta_{\by \to \by_0} 
= \sum_{\by_0 \to \byp \in E} \beta_{\by_0 \to \byp},
\end{equation} 
and we say that the pair $(G, \bbeta)$ is a \defi{complex-balanced flux system}.
\end{definition}

\begin{definition}
Given an E-graph $G=(V,E)$, we define the \defi{set of complex-balanced flux vectors} on $G$ as follows:
\begin{equation} 
\label{def: BG}
\mathcal{B}(G) :=
\{\bbeta \in \RR_{>0}^{E} \mid \bbeta  \text{ is a complex-balanced flux vector on $G$}\}.
\end{equation}
\end{definition}

Analogous to complex-balanced mass action systems, complex-balanced flux systems also have connections with E-graphs.

\begin{lemma}
\label{lem:cbf connection with graph}

Every E-graph which permits a complex-balanced flux system is weakly reversible.
Moreover, every E-graph which is weakly reversible permits complex-balanced flux systems.
\end{lemma}

\begin{proof}
First, suppose the E-graph $G = (V, E)$ allows a complex-balanced flux system $\bbeta = (\beta_{\byi \to \byj})_{\byi \to \byj \in E} \in \RR_{>0}^E$. We define a mass-action system $(G, \bk)$ with reaction rate constants 
\begin{equation} \notag
k_{\by \to \byp} = \beta_{\by \to \byp}, \ \text{for every } \by \to \byp \in E.
\end{equation}
Then, it is clear that $\bx^* = (1, \ldots, 1)^T$ is a complex-balanced steady state. 
Applying Theorem \ref{thm:cb connection with graph}, we deduce that $G = (V, E)$ is weakly reversible.

\smallskip

Next, assume that the E-graph $G = (V, E)$ is weakly reversible. 
From Theorem \ref{thm:cb connection with graph}, there exists a complex-balanced mass action system $(G, \bk)$ with a steady state $\bx^*$. 
We define a flux system $(G, \bbeta)$ with fluxes
\begin{equation} \notag
\beta_{\by \to \byp} := k_{\by \to \byp} (\bx^*)^{\by}, \ \text{for every } \by \to \byp \in E.
\end{equation}
Inputting $\bbeta$ into \eqref{eq:beta}, we derive that $(G, \bbeta)$ is a complex-balanced flux system.
\end{proof}

Subsequently, given an E-graph $G=(V,E)$, we conclude that
\begin{itemize}
\item If $G=(V,E)$ is weakly reversible, then $\bB (G) \neq \emptyset$.

\item If $G=(V,E)$ isn't weakly reversible, then $\bB (G) = \emptyset$.
\end{itemize}
Since we are not interested in the case when $\bB (G)$ is empty, thus we always assume that the E-graph $G=(V,E)$ is weakly reversible when working on $\bB (G)$.

\begin{lemma} \label{lem:Bproduct}

Let $G = (V, E)$ be a weakly reversible E-graph. Then the set of complex-balanced flux vectors $\mathcal{B}(G)$ is a convex cone in $\mathbb{R}_{>0}^{E}$.
\end{lemma}

\begin{proof}
Suppose two flux vectors $\bbeta^*, \bbeta^{**} \in \bB (G)$, then we get
\begin{equation}
\sum_{\by \to \by_0 \in E} \beta^*_{\by \to \by_0} 
= \sum_{\byp \to \by \in E} \beta^*_{\byp \to \by}, \ \text{and } 
\sum_{\by \to \by_0 \in E} \beta^{**}_{\by \to \by_0} 
= \sum_{\byp \to \by \in E} \beta^{**}_{\byp \to \by}.
\end{equation}
Now we consider the following set:
\begin{equation}
L (\bbeta^*, \bbeta^{**})
:= \{ a \bbeta^* + (1-a) \bbeta^{**}: 0 \leq a \leq 1\}.
\end{equation}
Under direct computation, we obtain for any number $0 \leq a \leq 1$,
\begin{equation}
\sum_{\byi\rightarrow \byj}
(a \beta^{*}_{\byi \rightarrow \byj} + (1-a) \beta^{**}_{\byi \rightarrow \byj}) 
= \sum_{\byj\rightarrow \byi}(a \beta^{*}_{\byj \rightarrow \byi} + (1-a) \beta^{**}_{\byj \rightarrow \byi}).
\end{equation}
Therefore, $L (\bbeta^*, \bbeta^{**})  \subset \bB (G)$ and we prove this Lemma.
\end{proof}

The following remark is a direct consequence of Lemma \ref{lem:Bproduct}.

\begin{remark}
Consider a weakly reversible E-graph $G=(V,E)$. Then the set of complex-balanced flux vectors $\mathcal{B}(G)$ is connected.
\end{remark}

\subsection{The toric locus \texorpdfstring{$\mathcal{V}(G)$}{Vg} is a product space}

The goal of this section is to establish the \emph{product structure} of the moduli spaces of toric dynamical systems via an explicitly constructed homeomorphism. 

Let us recall the well-known properties of a homeomorphism (see for instance \cite{lee}):

\begin{definition}
\label{def:homeo}

A function $f : X \to Y$ between two topological spaces is a homeomorphism, if it has the following properties:
$f$ is bijective, continuous and the inverse function $f^{-1}$ is continuous.
If such a function $f$ exists, we say two topological spaces $X$ and $Y$ are homeomorphic, and write this as
$X \simeq Y$.
\end{definition}

Now we present the main result in this paper.

\begin{theorem}
\label{th:product}

Let $G = (V, E)$ be a weakly reversible E-graph.
For any state ${\boldsymbol{x}_0} \in \mathbb{R}_{>0}^n$,
the toric locus $\mathcal{V}(G) \subseteq \mathbb{R}_{>0}^{E}$ is homeomorphic to the product space $\mathcal{S}_{{\boldsymbol{x}_0}} \times \mathcal{B}(G)$, that is, 
\begin{equation}
\mathcal{V}(G)	\simeq 
\mathcal{S}_{{\boldsymbol{x}_0}} \times \mathcal{B}(G),
\end{equation}
where $\mathcal{S}_{{\boldsymbol{x}_0}}$ is the invariant polyhedron, and $\mathcal{B}(G)$ is the set of complex-balanced flux vectors.
\end{theorem}

\medskip

To prove Theorem \ref{th:product}, we start by constructing a function $\varphi$ between the product space $\mathcal{S}_{{\boldsymbol{x}_0}} \times \mathcal{B}(G)$ and the toric locus  $\mathcal{V}(G)$. Then we show that $\varphi$ is a homeomorphism.

\begin{definition}
\label{def: varphi}
Let $G = (V, E)$ be a weakly reversible E-graph.
Given a state ${\boldsymbol{x}_0} \in \mathbb{R}_{>0}^n$, we define the following map:
\begin{equation} 
\varphi: \mathcal{S}_{{\boldsymbol{x}_0}} \times \mathcal{B}(G)\rightarrow \mathcal{V}(G),
\end{equation}
such that for any $\bx \in \mathcal{S}_{{\boldsymbol{x}_0}}$ and $\bbeta = (\beta_{\byi \to \byj})_{\byi \to \byj \in E} \in \mathcal{B}(G)$, 
\begin{equation} \label{eq:phixb}
\varphi (\bx, \bbeta) := (\varphi_{\byi \to \byj})_{\byi \to \byj \in E}, \ \text{with } \         
\varphi_{\byi \to \byj} := \frac{\beta_{\byi \to \byj}}{\bx^{\byi}}.
\end{equation}
\end{definition}
\begin{lemma}
\label{lem:phiContin}

For any state ${\boldsymbol{x}_0} \in \mathbb{R}_{>0}^n$,
the map $\varphi$ is well-defined, and continuous.
\end{lemma}

\begin{proof}
For any $\bbeta \in \bB (G) \subseteq \mathbb{R}_{>0}^{E}$ and $\bx \in \mathcal{S}_{{\boldsymbol{x}_0}} \subseteq \mathbb{R}_{>0}^n$, we get 
\begin{equation}
\varphi (\bx, \bbeta) = (\frac{\beta_{\byi \to \byj}}{\bx^{\byi}})_{\byi \to \byj \in E} \subseteq \mathbb{R}_{>0}^{E}.
\end{equation}
Since $\bbeta$ is a complex-balanced flux vector, we get
\begin{equation} \notag
\sum_{\byi\rightarrow \byj} \varphi_{\byi \to \byj} \bx^\byi
= \sum_{\byj\rightarrow \byi} \varphi_{\byi \to \byj}  \bx^\byj.
\end{equation} 
Hence, $\varphi (\bx, \bbeta)$ is a complex-balanced rate vector with the complex-balanced steady state $\bx \in \mathcal{S}_{{\boldsymbol{x}_0}}$ on $G$. 
Therefore, we conclude that $\varphi (\bx, \bbeta) \in \mathcal{V}(G)$, and $\varphi$ is well-defined.
Further, from Definition \ref{def: varphi} we can directly get that $\varphi$ is a continuous map.
\end{proof}

\begin{lemma} \label{lem:bijective}
For any state ${\boldsymbol{x}_0} \in \mathbb{R}_{>0}^n$, the map $\varphi$ is bijective.
\end{lemma}

\begin{proof}
First, we show $\varphi$ is surjective.
By Theorem \ref{thm:HJ}, for any reaction rate vector $\bk \in \mathcal{V}(G)$, there exists a (unique) complex-balanced steady state $\bx \in \mathcal{S}_{{\boldsymbol{x}_0}}$. Then we define a flux vector $\bbeta = (\beta_{\byi \to \byj})_{\byi \to \byj \in E}$ as follows:
\begin{equation} \notag
\beta_{\byi \to \byj} := k_{\byi \to \byj} \bx^{\byi}.
\end{equation} 
Using Lemma \ref{lem:cbf connection with graph}, we derive that $\bbeta \in \bB (G)$, and 
$\varphi(\bx,\boldsymbol\beta)={\bk}$.

\smallskip

Next, we show $\varphi$ is injective.
Assume that $(\hat{\bx}, \hat{\bbeta}), (\tilde{\bx}, \tilde{\bbeta}) \in \mathcal{S}_{{\boldsymbol{x}_0}} \times \mathcal{B}(G)$, such that 
\[
\varphi (\hat{\bx}, \hat{\bbeta}) = \varphi (\tilde{\bx}, \tilde{\bbeta}).
\]
Following \eqref{eq:phixb}, we derive two reaction rate vectors $\hat{\varphi}$ and $\tilde{\varphi}$ as follows:
\begin{equation} \label{eq: injective}
\varphi (\hat{\bx}, \hat{\bbeta}) :=
\bigg( \frac{\hat\beta_{\byi \to \byj}}{\hat{\bx}^{\byi}} \bigg)_{\byi \to \byj \in E}, \ \text{and } \
\varphi (\tilde{\bx}, \tilde{\bbeta}) :=
\bigg( \frac{\tilde\beta_{\byi \to \byj}}{\tilde{\bx}^{\byi}} \bigg)_{\byi \to \byj \in E}
\end{equation}
From $\varphi (\hat{\bx}, \hat{\bbeta}) = \varphi (\tilde{\bx}, \tilde{\bbeta})$ and Lemma \ref{lem:phiContin}, the uniqueness on the complex-balanced steady state within each affine invariant polyhedron, we obtain $\hat{\bx} = \tilde{\bx}$. Then from Equation \eqref{eq: injective}, it is clear that $\hat{\bbeta} = \tilde{\bbeta}$, and we conclude the injectivity.
\end{proof}

\begin{lemma} \label{prop:contInv}
For any state ${\boldsymbol{x}_0} \in \mathbb{R}_{>0}^n$, the map $\varphi^{-1}$ is well-defined, and continuous.
\end{lemma}

\begin{proof}
Since we have proved that the map $\varphi$ is bijective in Lemma \ref{lem:bijective}, it is standard that $\varphi^{-1}$ is well-defined.

\smallskip

Now we show that $\varphi^{-1}$ is continuous.
From Lemma \ref{lem:phiContin}, given any $(\bx, \bbeta) \in \mathcal{S}_{{\boldsymbol{x}_0}} \times \mathcal{B}(G)$, $\varphi (\bx, \bbeta)$ forms a complex-balanced rate vector with $\bx$ being the complex-balanced steady state. 
Since $\varphi$ is bijective and the complex-balanced steady state is unique in $\mathcal{S}_{{\boldsymbol{x}_0}}$, for any complex-balanced rate vector $\bk = (k_{\byi \to \byj})_{\byi \to \byj \in E} \in \mathcal{V}(G)$, we have
\begin{equation}
\varphi^{-1} (\bk) = (\bx, \bbeta),
\end{equation}
such that
\begin{equation} \label{x beta in inverse}
\bx = Q_{{\boldsymbol{x}_0}}(\bk) \ \ \text{and } \
\beta_{\byi \to \byj} := k_{\byi \to \byj} \bx^{\byi}, \ \text{with } \
\bbeta = (\beta_{\byi \to \byj})_{\byi \to \byj \in E}.
\end{equation}
Applying Theorem \ref{th:qCont}, we get the map $Q_{{\boldsymbol{x}_0}}$ is continuous, which says that 
$\bx$ depends continuously on $\bk$. 
Moreover, every component in $\bbeta$ can be written as a polynomial of $\bk$ and $\bx$. This reveals that 
$\bbeta$ also depends continuously on $\bk$. 

After showing that both components in the product space $\mathcal{S}_{{\boldsymbol{x}_0}} \times \mathcal{B}(G)$ vary continuously on $\bk$, we conclude the continuity on the map $\varphi^{-1}$.
\end{proof}

Finally, we are able to prove Theorem \ref{th:product}.

\begin{proof}[\textbf{Proof of Theorem \ref{th:product}}]
From Definition \ref{def:homeo}, it suffices to show that the map $\varphi$ is a homeomorphism. 
Applying Lemma \ref{lem:bijective}, we derive $\varphi$ as a bijective function. 
From Lemma \ref{lem:phiContin} and Lemma \ref{prop:contInv}, we show that both $\varphi$ and $\varphi^{-1}$ are continuous functions.
Therefore, we conclude $\varphi$ is a homeomorphism, and prove this theorem.
\end{proof}

\subsection{Connection to deficiency theory}
\label{sect:deficiency}

The notion of deficiency of a reaction network or E-graph was introduced by  Feinberg and Horn~\cite{MR413930, horn1972necessary}. It is an invariant of the network and plays a key role in the study of complex-balanced steady states of a network~\cite{feinberg, HJ}. 

\begin{definition}[\cite{feinberg, cy}]
\label{def:defic}
Consider an E-graph $G = (V, E)$ with $\ell$ connected components and $m$ vertices. 
Let $s$ be the dimension of the stoichiometric subspace $\mathcal{S}$. The \defi{deficiency} of an E-graph $G$ is the non-negative integer 
\begin{equation}
\delta := m - l -s.
\end{equation}
\end{definition}

Under mass-action kinetics, networks with low deficiency have special dynamical properties. For example, the deficiency zero theorem
shows that weakly reversible deficiency zero networks are complex-balanced for any choices of rate constants ~\cite{MR413930, horn1972necessary}.
In \cite{CDSS}, it was shown that given a weakly reversible E-graph $G$, the set $\mathcal{V}(G)$ is an algebraic variety of codimension $\delta$ in $\mathbb{R}_{>0}^{E }$.
In the following, we will recover this result by using the product structure of the moduli space $\mathcal{V}(G)$ from Theorem \ref{th:main}.

\begin{proposition} \label{prop:dim}
Consider an E-graph $G = (V, E)$ with $\ell$ connected components and $m$ vertices. 
Let $s$ be the dimension of the stoichiometric subspace $\mathcal{S}$, then
\begin{equation*}
\mathrm{dim}(\mathcal{V}(G)) = \vert E\vert - m + s + l.
\end{equation*}
\end{proposition}

\begin{proof}
Recall that the dimension of a product of topological spaces is a topological invariant and it is given by the sum of the dimensions of the factors \cite{lee}.
In addition, the dimension of a variety at a regular point is the dimension of its tangent vector space at that point, thus it is the same dimension as seen as a manifold as well as seen as a variety \cite{kendig}.

Now using Theorem \ref{th:product}, we have for any state ${\boldsymbol{x}_0} \in \mathbb{R}_{>0}^n$, 
\begin{equation} \label{dim VG}
\mathrm{dim}(\mathcal{V}(G))=\mathrm{dim}\big (({\boldsymbol{x}_0}+\mathcal{S})\cap \mathbb{R}_{>0}^n\big )+\mathrm{dim}(\mathcal{B}(G)),
\end{equation}
and it is clear that
$\mathrm{dim}\big (({\boldsymbol{x}_0}+\mathcal{S})\cap \mathbb{R}_{>0}^n\big ) = \dim (\mathcal{S}) = s$.

Recall that $\mathcal{B}(G) \subseteq \mathbb{R}_{>0}^{E}$ represents the set of complex-balanced flux vectors that satisfy \eqref{eq:beta}. 
Following Kirchhoff junction rules, for each connected component of $G$ with $m_i$ vertices, there are $m_i - 1$ independent conditions among the linear conditions defining $\mathcal{B}(G)$ in \eqref{eq:beta}.
Further, we can check that linear conditions are independent when working on different connected components of $G$. Hence, we get
\begin{equation*}
    \mathrm{dim}(\mathcal{B}(G))
    = \vert E\vert - \sum\limits^{l}_{i=1} (m_i -1 )
    =\vert E\vert -m+l.
\end{equation*}
Together with \eqref{dim VG}, we conclude the proposition.
\end{proof}

The following corollary is a direct consequence of Proposition \ref{prop:dim}. It was first proved by a different method in \cite{CDSS}.

\begin{corollary} \label{cor:defic}
Let $G = (V, E)$ be a weakly reversible E-graph. Then the codimension of the moduli space $\mathcal{V}(G) \subseteq \mathbb{R}_{>0}^{E}$ is  $\delta$.
\end{corollary}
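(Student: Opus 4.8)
The plan is to obtain the corollary as a one-line consequence of the dimension formula in Proposition \ref{prop:dim} together with the definition of deficiency. First I would observe that $\mathcal{V}(G)$ is a subset of the ambient space $\mathbb{R}_{>0}^{E}$, which is an open subset of $\mathbb{R}^{|E|}$ and therefore has dimension $|E|$. The codimension of $\mathcal{V}(G)$ is by definition the difference between this ambient dimension and $\dim(\mathcal{V}(G))$, so everything reduces to subtracting the value produced by Proposition \ref{prop:dim}.

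Next I would substitute the value $\dim(\mathcal{V}(G)) = |E| - m + s + l$ supplied by Proposition \ref{prop:dim} and read off the deficiency on the right-hand side:
\begin{equation*}
\mathrm{codim}(\mathcal{V}(G)) = |E| - \dim(\mathcal{V}(G)) = |E| - (|E| - m + s + l) = m - s - l = \delta.
\end{equation*}
This chain of equalities is the entire argument, and the final identification uses only that $\delta := m - s - l$ by definition.

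Since the genuine content — the product decomposition of Theorem \ref{th:product} and the Kirchhoff-type count of independent flux constraints giving $\dim(\mathcal{B}(G)) = |E| - m + l$ — has already been carried out in Proposition \ref{prop:dim}, there is no real obstacle remaining: the corollary is pure arithmetic combined with unfolding the definition of $\delta$. The only point I would state carefully is that the dimensions appearing in Proposition \ref{prop:dim} are genuine (pure) dimensions, so that the codimension is well-defined; this is already guaranteed because $\mathcal{V}(G)$ is homeomorphic to the product of an invariant polyhedron and the convex cone $\mathcal{B}(G)$, each of which has well-defined dimension.
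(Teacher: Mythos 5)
Your proposal is correct and follows exactly the same route as the paper: subtract the dimension $|E|-m+s+l$ from Proposition \ref{prop:dim} from the ambient dimension $|E|$ and identify the result $m-s-l$ with $\delta$ by definition. The extra remarks about the ambient dimension and pure dimensionality are harmless elaborations of the same one-line computation.
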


\begin{proof}
The codimension on $\mathcal{V}(G)$ follows
\begin{equation*}
\mathrm{codim}(\mathcal{V}(G))
=\vert E \vert - \dim (\mathcal{V}(G))
=\vert E \vert -(\vert E\vert -m+s+l)=\delta.
\end{equation*}
\end{proof}

\subsection{Bijective affine transformations preserve the toric locus} \label{sect:bijectiveAffineTransf}

In this subsection, we prove that the toric locus is preserved by bijective affine transformations of the network.

\begin{definition}
Consider a network $G = (V, E)$ in $\mathbb{R}^n$. 
Suppose $T: \mathbb{R}^n \to \mathbb{R}^n$ is a bijective affine transformation. Denote by 
\[
T(V) := \{ T(\boldsymbol{y}) \mid \boldsymbol{y} \in V\}, \text{ and } \
T(E) := \{ T(\boldsymbol{y}_i) \to T(\boldsymbol{y}_j) \mid  \by_i \to \by_j \in E\}.
\]
Then we call the graph $T(G) := (T(V), T(E))$ the \defi{bijective affine image of $G$ by $T$}. 
\end{definition}

\begin{theorem} \label{thm:bijectiveAffine}
Consider a weakly reversible E-graph $G_1$. If $G_2$ is a bijective affine image of the graph $G_1$, then $G_1$ and $G_2$ have the same toric locus. Namely, 
\[
\mathcal{V}(G_1)=\mathcal{V}(G_2).
\]
\end{theorem}

\begin{proof}
The result follows from \cite[Theorem 9]{CDSS} and from the Matrix-Tree theorem. In particular see \cite[Section 2, page 5]{CDSS}: the change of coordinates given by the spanning trees in the two graphs are the same.
\end{proof}

\section{Discussion and future work}
\label{sec:dis}

There has been strong interest in the study of the moduli space of toric dynamical systems, i.e., the set of parameters that give rise to complex-balanced dynamical systems. This interest is due to the very stable dynamical behaviour of these systems; for instance, the complex-balanced steady states are known to be \emph{locally asymptotically stable} within their invariant polyhedron.

Since important properties of complex-balanced dynamical systems can be analyzed using Nonlinear Algebra tools (see for example \cite[Chapter 6]{breiding2021nonlinear}), the authors of \cite{CDSS}  called these systems \emph{toric dynamical systems} (see also \cite[Chapter 5]{MR4284895}). Indeed, not only the moduli spaces of toric dynamical systems are toric, but also  the steady-state locus (i.e. the fixed points) of toric dynamical systems can be described by binomial equations; see \cite{Ga01}. Another computational advantage of this fact is that one may  describe the steady states of such a system in terms of monomial parametrizations (\cite{MR4152911}). The fruitful combinatorial and computational properties of binomial ideals are well-known and they are desirable in  applications, since toric algebraic varieties are well-understood.

In this paper we prove that, given a complex-balanced mass-action system and positive initial data, the positive complex-balanced equilibria vary continuously in function of the parameters.
Next, using this result, we show that the toric locus is connected and we emphasize the product structure of the toric locus. Namely, we prove that there exists a homeomorphism between the toric locus and the product of the set of complex-balanced flux vectors and the affine invariant polyhedron.
We provide an explicit parametrization in terms of the parameters (i.e., the reaction rate constants), as shown in \eqref{eq:phixb}.

In future work \cite{smooth2022}, we will use some of the approaches developed here to show that the positive complex-balanced equilibria of a complex-balanced mass-action system actually depend smoothly on the reaction rate constants and on the initial data.
Furthermore, the approach used on showing the homeomorphism will allow us to derive regularity properties of the toric variety $\mathcal{V}(G)$.

\section*{Acknowledgements}
The authors gratefully acknowledge the support of Bernd Sturmfels and of the Max Planck Institute for Mathematics in the Sciences in Leipzig, Germany.  G.~Craciun was  partially supported by the National Science Foundation  grant  DMS--2051568. M.-{\c S}. Sorea is also grateful to Antonio Lerario for the very supportive working conditions during her postdoc at SISSA, in Trieste, Italy.

\printbibliography

\noindent
\footnotesize 

{\bf \noindent Authors:}

\noindent Gheorghe Craciun\\
University of Wisconsin-Madison, USA\\
 {\tt craciun@math.wisc.edu}
\vspace{0.5cm}

\noindent Jiaxin Jin\\ Ohio State University, USA\\
{\tt jin.1307@osu.edu}
\vspace{0.5cm}

\noindent Miruna-\c Stefana Sorea\\ SISSA (Scuola Internazionale Superiore di Studi Avanzati), Trieste, Italy and Lucian Blaga University, Sibiu, Romania\\
{\tt msorea@sissa.it, mirunastefana.sorea@ulbsibiu.ro}

\end{document}